\newcommand*{\mint}[1]{%
  % #1: overlay symbol
  \mint@l{#1}{}%
}
\newcommand*{\mint@l}[2]{%
  % #1: overlay symbol
  % #2: limits
  \@ifnextchar\limits{%
    \mint@l{#1}%
  }{%
    \@ifnextchar\nolimits{%
      \mint@l{#1}%
    }{%
      \@ifnextchar\displaylimits{%
        \mint@l{#1}%
      }{%
        \mint@s{#2}{#1}%
      }%
    }%
  }%
}
\newcommand*{\mint@s}[2]{%
  % #1: limits
  % #2: overlay symbol
  \@ifnextchar_{%
    \mint@sub{#1}{#2}%
  }{%
    \@ifnextchar^{%
      \mint@sup{#1}{#2}%
    }{%
      \mint@{#1}{#2}{}{}%
    }%
  }%
}
\def\mint@sub#1#2_#3{%
  \@ifnextchar^{%
    \mint@sub@sup{#1}{#2}{#3}%
  }{%
    \mint@{#1}{#2}{#3}{}%
  }%
}
\def\mint@sup#1#2^#3{%
  \@ifnextchar_{%
    \mint@sup@sub{#1}{#2}{#3}%
  }{%
    \mint@{#1}{#2}{}{#3}%
  }%
}
\def\mint@sub@sup#1#2#3^#4{%
  \mint@{#1}{#2}{#3}{#4}%
}
\def\mint@sup@sub#1#2#3_#4{%
  \mint@{#1}{#2}{#4}{#3}%
}
\newcommand*{\mint@}[4]{%
  % #1: \limits, \nolimits, \displaylimits
  % #2: overlay symbol: -, =, ...
  % #3: subscript
  % #4: superscript
  \mathop{}%
  \mkern-\thinmuskip
  \mathchoice{%
    \mint@@{#1}{#2}{#3}{#4}%
        \displaystyle\textstyle\scriptstyle
  }{%
    \mint@@{#1}{#2}{#3}{#4}%
        \textstyle\scriptstyle\scriptstyle
  }{%
    \mint@@{#1}{#2}{#3}{#4}%
        \scriptstyle\scriptscriptstyle\scriptscriptstyle
  }{%
    \mint@@{#1}{#2}{#3}{#4}%
        \scriptscriptstyle\scriptscriptstyle\scriptscriptstyle
  }%
  \mkern-\thinmuskip
  \int#1%
  \ifx\\#3\\\else_{#3}\fi
  \ifx\\#4\\\else^{#4}\fi
}
\newcommand*{\mint@@}[7]{%
  % #1: limits
  % #2: overlay symbol
  % #3: subscript
  % #4: superscript
  % #5: math style
  % #6: math style for overlay symbol
  % #7: math style for subscript/superscript
  \begingroup
    \sbox0{$#5\int\m@th$}%
    \sbox2{$#5\int_{}\m@th$}%
    \dimen2=\wd0 %
    % => \dimen2 = width of \int
    \let\mint@limits=#1\relax
    \ifx\mint@limits\relax
      \sbox4{$#5\int_{\kern1sp}^{\kern1sp}\m@th$}%
      \ifdim\wd4>\wd2 %
        \let\mint@limits=\nolimits
      \else
        \let\mint@limits=\limits
      \fi
    \fi
    \ifx\mint@limits\displaylimits
      \ifx#5\displaystyle
        \let\mint@limits=\limits
      \fi
    \fi
    \ifx\mint@limits\limits
      \sbox0{$#7#3\m@th$}%
      \sbox2{$#7#4\m@th$}%
      \ifdim\wd0>\dimen2 %
        \dimen2=\wd0 %
      \fi
      \ifdim\wd2>\dimen2 %
        \dimen2=\wd2 %
      \fi
    \fi
    \rlap{%
      $#5%
        \vcenter{%
          \hbox to\dimen2{%
            \hss
            $#6{#2}\m@th$%
            \hss
          }%
        }%
      $%
    }%
  \endgroup
}
\def\loc{{\mathop\mathrm{\,loc\,}}}
\def\bint{{\ifinner\rlap{\bf\kern.35em--}
\int\else\rlap{\bf\kern.45em--}\int\fi}\ignorespaces}
\def\bbint{{\ifinner\rlap{\bf\kern.35em--}
\hspace{0.078cm}\int\else\rlap{\bf\kern.45em--}\int\fi}\ignorespaces}
\newtheorem{thm}{Theorem}[section]
\newtheorem{lem}[thm]{Lemma}%[section]     %@@!!@@!!
\newtheorem{prop}[thm]{Proposition}%[section]    %@@!!@@!!
\numberwithin{equation}{section}
\newtheorem{conj}[thm]{Conjecture}
\title
{\Large\bf Uniqueness of diffeomorphic  minimizers of $L^p$-mean distortion
\footnotetext{\hspace{-0.35cm}
\endgraf
The author is supported by the Academy of Finland, project no. 334014.\\
Email: yizhu@jyu.fi\\
{\bf Key words phrases:} $L^p$-mean distortion, Hopf-Laplace equation, holomorphic quadratic differentials, uniqueness.\\
{\bf 2020 MSC:} 2020 Mathematics Subject Classification. Primary 31A05; Secondary 30C62, 35J25.
 \endgraf 
}}
\author{Yizhe Zhu}
 \date{}
\begin{document}

\arraycolsep=1pt
\allowdisplaybreaks

 \maketitle
 \begin{abstract}
We study the $\,L^p$-mean distortion functionals,
\[{\cal E}_p[f] = \int_\mathbb Y K^p_f(z) \; dz, \]
for Sobolev homeomorphisms $f \colon \overline{\mathbb Y}\xlongrightarrow{\rm onto}  \overline{\mathbb  X}$ where $\mathbb X$ and $\mathbb Y$ are  bounded simply connected Lipschitz domains, and $f$ coincides with a given boundary map $f_0 \colon \partial \mathbb Y \to \partial \mathbb X$. Here, $K_f(z)$ denotes the pointwise distortion function of $f$.  It is conjectured that for every $1 < p < \infty$, the functional $\mathcal{E}_p$ admits a minimizer that is a diffeomorphism. 
We prove that if such a diffeomorphic minimizer exists, then it is unique.
 \end{abstract}
\section{Introduction}
Let $\mathbb X$ and $\mathbb Y$ be  bounded simply connected Lipschitz domains in the complex plane $\mathbb C$ and $f \colon \mathbb Y \xlongrightarrow{\rm onto} \mathbb X$ a homeomorphism of Sobolev class $W^{1,1}_{\loc} (\mathbb{Y}, \mathbb{C})$. We say that 
$f$ has {\it finite distortion} if it satisfies
\begin{equation}\label{fd}
\|Df(z)\|^2 \leqslant 2 K (z) \, J(z,f) \qquad \text{ almost everywhere}
\end{equation}
for some measurable function $1 \leqslant K (z) < \infty$. Here, $\|Df(z)\|$ denotes  the Hilbert-Schmidt norm of the differential $Df$ and $J(z,f) = \det Df (z)$ is the Jacobian determinant of $f$ at $z$. The smallest such $K (z)$ will be denoted by $K_f(z)$ and is called the {\it distortion function} of $f$. Thus, under the assumption~\eqref{fd}, we have
\begin{equation}\label{distfn}
K_f(z) =  \frac{\|Df(z)\|^2}{2J(z,f)}, \; {\rm if }\; J(z,f)>0, \; {\rm and}\;\;  K_f (z) =
1, \; {\rm if}\;\; J(z,f)=0.
\end{equation}
We study homeomorphisms $f \colon \overline{\mathbb Y}\xlongrightarrow{\rm onto} \overline{\mathbb  X}$  of finite distortion with $K_f \in  L^p (\mathbb Y) $, $1 \leqslant p < \infty$. Denote
\begin{align*}
    \mathscr{E}_p[f]= \int_\mathbb Y K^p_f(z)\ dz,
\end{align*}

Let $ f_0 \colon \overline{\mathbb{Y}} \xrightarrow{\rm onto} \overline{\mathbb{X}}$ be a homeomorphism of finite distortion such that $ \mathscr{E}_p[f_0] < \infty$. We regard $ f_0 $ as the prescribed boundary data and define the corresponding classes of homeomorphic and diffeomorphic mappings as
\begin{equation}\label{homeomorphism,f_0,p}
\mathscr{H}^p_{f_0}(\overline{\mathbb{Y}}, \overline{\mathbb{X}}) := \left\{ f \in W^{1,1}_{\mathrm{loc}}(\mathbb{Y}, \mathbb{C}) : \mathscr{E}_p[f] < \infty,\ 
f|_{\partial \mathbb{Y}} = f_0|_{\partial \mathbb{Y}}, f:\overline{\mathbb{Y}}\xrightarrow{\rm onto}\overline{\mathbb{X}}\ {\rm is\ homeomorphism} \right\}
\end{equation}
and
\begin{equation}\label{diffeomorphism,f_0,p}
{\rm Diff}^p_{f_0}(\overline{\mathbb{Y}}, \overline{\mathbb{X}}):=\left\{f\in\mathscr{H}^p_{f_0}(\overline{\mathbb{Y}}, \overline{\mathbb{X}}): f:\mathbb{Y}\xrightarrow{\rm onto}\mathbb{X}\ {\rm is\ a\ }
C^\infty-{\rm diffeomorphism} \right\}.
\end{equation}
We recall the following conjecture from~\cite{iwaniec2014minimisers} and \cite{iwaniec2021energy}:
\begin{conj}\label{conj}
Let $\mathbb{D} \subset \mathbb{C}$ be the unit disk and $1<p<\infty$. In the space $\mathscr{H}^p_{f_0}(\overline{\mathbb{D}}, \overline{\mathbb{D}})$, there exists a minimizer $ f$ such that
\begin{equation}\label{con-D-p}
\mathscr{E}_p[f] = \min_{g \in \mathscr{H}^p_{f_0}(\overline{\mathbb{D}}, \overline{\mathbb{D}})} \mathscr{E}_p[g].
\tag{C-1}
\end{equation}

Moreover, this minimizer is a 
$C^{\infty}$-smooth diffeomorphism from $\mathbb{D}$ onto  $\mathbb{D}$.
\end{conj}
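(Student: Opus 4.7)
The plan proceeds in two complementary stages, corresponding to the two assertions of Conjecture~\ref{conj}: (I) produce a minimizer in $\mathscr{H}^p_{f_0}(\overline{\mathbb{D}}, \overline{\mathbb{D}})$ via the direct method of the calculus of variations, and (II) show that any such minimizer is automatically a $C^\infty$-diffeomorphism via the Euler--Lagrange (Hopf--Laplace) equation and a bootstrap argument.

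For (I), let $\{f_n\} \subset \mathscr{H}^p_{f_0}(\overline{\mathbb{D}}, \overline{\mathbb{D}})$ be a minimizing sequence. Combining the pointwise inequality $\|Df_n\|^2 \leqslant 2 K_{f_n} J(\cdot, f_n)$ with H\"older and the geometric bound $\int_{\mathbb{D}} J(z, f_n)\,dz \leqslant |\mathbb{D}|$, which holds because each $f_n$ is a homeomorphism onto the disk, yields a uniform $W^{1,q}$-bound with the critical exponent $q = 2p/(p+1) > 1$. Extract a weakly convergent subsequence $f_n \rightharpoonup f$. Lower semicontinuity $\mathscr{E}_p[f] \leqslant \liminf_n \mathscr{E}_p[f_n]$ then follows from the polyconvexity of the integrand expressed as a function of $(Df, J(\cdot, f))$ together with weak continuity of the distributional Jacobian at this integrability. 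The boundary identification $f|_{\partial \mathbb{D}} = f_0|_{\partial \mathbb{D}}$ is preserved by trace continuity. The genuinely delicate step is that the class of homeomorphisms is not weakly closed: a priori, $f$ is only monotone in the sense of Young. Invoking structural results on Sobolev limits of homeomorphisms with equibounded $L^p$-distortion, together with the hypothesis that $f_0$ is a homeomorphism on $\partial \mathbb{D}$, a topological argument specific to the disk forces $f$ to be a homeomorphism of $\overline{\mathbb{D}}$, placing it in $\mathscr{H}^p_{f_0}$ and completing the existence part.

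For (II), given any minimizer $f$, I compute the first inner variation by replacing $f$ with $f \circ \phi_t^{-1}$ where $\phi_t$ is a smooth one-parameter family of self-diffeomorphisms of $\overline{\mathbb{D}}$ with $\phi_0 = \mathrm{id}$ and $\phi_t|_{\partial \mathbb{D}} = \mathrm{id}$. Setting $\frac{d}{dt}\big|_{t=0} \mathscr{E}_p[f \circ \phi_t^{-1}] = 0$ yields the Hopf--Laplace equation, asserting that the quadratic expression
\[
\Phi \;=\; K_f^{\,p-1}\,\frac{f_z\,\overline{f_{\bar z}}}{J(\cdot,f)}
\]
is anti-holomorphic in the distributional sense and hence a holomorphic quadratic differential on $\mathbb{D}$. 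On the open set $\{\Phi \neq 0\}$ one can algebraically recover the Beltrami coefficient $\mu = f_{\bar z}/f_z$ with $|\mu| < 1$ locally uniformly and with $\mu$ inheriting the regularity of $\Phi$. Morrey's theorem combined with Schauder estimates and the real-analytic structure of the resulting nonlinear system then bootstraps $f$ to $C^\infty$, and the inverse function theorem gives local injectivity there.

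The principal obstacle is to rule out zeros of $\Phi$ and vanishing of $J(\cdot, f)$ inside $\mathbb{D}$, so that $f$ is globally a diffeomorphism. By holomorphicity, zeros of $\Phi$ are isolated, so the task reduces to excluding branch points of $f$. The standard strategy is a competitor argument: near any putative branch point, construct a compactly supported admissible perturbation of $f$ that strictly decreases $\mathscr{E}_p$, contradicting minimality. Making this rigorous for every $p \in (1, \infty)$ is precisely where Conjecture~\ref{conj} has resisted proof so far; in the limit $p = 1$ one ties into extremal quasiconformal theory and Teichm\"uller's theorem, while for $p > 1$ the analysis requires a subtle interaction between the holomorphic structure of $\Phi$, the convex-dual formulation of $K_f^p$, and the sign of the second inner variation of $\mathscr{E}_p$.
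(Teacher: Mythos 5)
The statement you are trying to prove is Conjecture~\ref{conj}, which the paper explicitly presents as an \emph{open problem} quoted from the literature; the paper contains no proof of it and in fact its main theorem is conditional on it. So there is no ``paper proof'' to compare against, and your proposal must stand on its own as a complete argument --- which it does not. You concede this yourself in the final paragraph: the step of excluding zeros of the Hopf differential and degeneracy of $J(\cdot,f)$, i.e.\ of upgrading a stationary point of the inner variation to a genuine diffeomorphism, is exactly the content of the conjecture, and your text says it ``has resisted proof so far.'' A proof that ends by naming the main difficulty and leaving it unresolved is an outline of a research program, not a proof.

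Beyond that acknowledged gap, stage (I) also has unpatched holes. The uniform bound you extract is only in $W^{1,q}$ with $q = 2p/(p+1) < 2$, and at that integrability the class of homeomorphisms is not weakly closed: minimizing sequences can collapse subdomains, and the known compactness theory produces a limit only in an enlarged class of monotone Sobolev mappings, not in $\mathscr{H}^p_{f_0}$. The phrase ``a topological argument specific to the disk forces $f$ to be a homeomorphism'' is precisely the assertion that needs proof, and no such argument is supplied. Lower semicontinuity of $\int_{\mathbb D} K_f^p$ is also delicate because $K_f$ is defined piecewise (set equal to $1$ on the zero set of the Jacobian), so the integrand is not simply a polyconvex function of $(Df, J)$ on all of the phase space; one must argue through the dual formulation \eqref{p-energy} for the inverse map or through biting convergence of Jacobians. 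In stage (II), recovering the Beltrami coefficient from $\Phi$ on $\{\Phi\neq 0\}$ gives $|\mu|<1$ pointwise but not local uniform ellipticity without first knowing $K_f$ is locally bounded, which does not follow from $K_f \in L^p$. In short: the architecture you describe is the standard and reasonable one, but every load-bearing step is either missing or deferred, so the conjecture remains unproved by this proposal.
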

Even a stronger version of the conjecture remains open: for every \( p \in (1, \infty) \), it is conjectured that there is $f\in{\rm Diff}^p_{f_0}(\overline{\mathbb{Y}}, \overline{\mathbb{X}})$ such that
\begin{equation}\label{con-Y-p}
\mathscr{E}_p[f] = \min_{g \in {\rm Diff}^p_{f_0}(\overline{\mathbb{Y}}, \overline{\mathbb{X}})} \mathscr{E}_p[g].
\tag{C-2}
\end{equation}
Here, the class ${\rm Diff}^p_{f_0}(\overline{\mathbb{Y}}, \overline{\mathbb{X}})$ is assumed to be nonempty. At least, no counterexample is currently known.

We establish the uniqueness of such  minimizers, if they exists:
\begin{thm}\label{unique-diffeomorphism}
Let $\mathbb X$ and $\mathbb Y$ be  bounded simply connected domains in the complex plane $\mathbb C$, and $p\in (1, \infty)$. Suppose that the conjecture $\eqref{con-Y-p}$ holds,
then this minimizer is unique.
\end{thm}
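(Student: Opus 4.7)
The plan is to reformulate the problem on the inverse side, where the integrand acquires strict convexity in the Jacobian matrix, and then to conclude uniqueness by linear interpolation together with a topological step that bypasses any convexity hypothesis on $\mathbb Y$. Suppose $f_1, f_2 \in {\rm Diff}^p_{f_0}(\overline{\mathbb Y},\overline{\mathbb X})$ are two minimizers with common minimum value $m$; their inverses $h_i := f_i^{-1}$ are $C^\infty$-diffeomorphisms from $\overline{\mathbb X}$ onto $\overline{\mathbb Y}$ sharing the boundary datum $h_0 := f_0^{-1}|_{\partial\mathbb X}$. From the identity $K_f(z) = K_{f^{-1}}(f(z))$ and the change of variables $z = h(w)$,
\[
\mathscr{E}_p[f] \;=\; \int_{\mathbb X} K_h^p(w)\, J_h(w)\,dw \;=\; \frac{1}{2^p}\int_{\mathbb X}\frac{\|Dh(w)\|^{2p}}{J_h(w)^{p-1}}\,dw \;=:\; \mathscr{F}_p[h],
\]
so $\mathscr{F}_p[h_1] = \mathscr{F}_p[h_2] = m$, and it suffices to show $h_1 \equiv h_2$.

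The crucial analytic input is that $W(A) := \|A\|^{2p}/(\det A)^{p-1}$ is \emph{strictly convex} on the open cone $\mathrm{GL}_+(2) := \{A\in\mathbb R^{2\times 2}:\det A>0\}$. I would establish this starting from the polyconvex (perspective) representation
\[
W(A) \;=\; (\det A)\,\varphi\!\Big(\tfrac{\|A\|^2}{\det A}\Big),\qquad \varphi(t)=t^p,
\]
which exhibits $W$ as the perspective of the strictly convex $\varphi$, hence jointly convex in $(A,\det A)$. Strictness in $A$ alone follows from an explicit Hessian calculation in singular-value coordinates; the radial degeneracy of the perspective in the auxiliary variables does not persist in the matrix variable because $A \mapsto (\|A\|^2, \det A)$ is a genuinely nonlinear quadratic map, so an honest linear interpolation of two distinct matrices cannot sit on a perspective ray.

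With strict convexity in hand, define $h_s := (1-s) h_1 + s h_2$ for $s\in [0,1]$. Since $\mathrm{GL}_+(2)$ is a convex cone, $Dh_s$ has positive determinant pointwise, so $h_s$ is an orientation-preserving local $C^\infty$-diffeomorphism with $h_s|_{\partial\mathbb X} = h_0$. A Brouwer-degree argument then promotes $h_s$ to a global diffeomorphism of $\overline{\mathbb X}$ onto $\overline{\mathbb Y}$: because $h_0$ is an orientation-preserving Jordan-curve homeomorphism, $\deg(h_s, \mathbb X, y) = 1$ for $y\in \mathbb Y$ and $0$ for $y\in\mathbb C\setminus\overline{\mathbb Y}$; positivity of $J_{h_s}$ forces every preimage to contribute $+1$, so fibres over $\mathbb Y$ are singletons and empty outside $\overline{\mathbb Y}$, and a local-diffeomorphism argument excludes $h_s(\mathbb X)\cap\partial\mathbb Y$. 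Hence $f_s := h_s^{-1}$ lies in ${\rm Diff}^p_{f_0}$ and $\mathscr{F}_p[h_s]\ge m$. On the other hand, strict convexity of $W$ gives $W(Dh_s(w)) \le (1-s)W(Dh_1(w)) + s W(Dh_2(w))$ pointwise, with equality iff $Dh_1(w)=Dh_2(w)$; integrating and using $\mathscr{F}_p[h_1]=\mathscr{F}_p[h_2]=m$ yields $\mathscr{F}_p[h_s]\le m$, forcing pointwise equality and hence $Dh_1\equiv Dh_2$ a.e.\ in $\mathbb X$. Combined with $h_1=h_2=h_0$ on $\partial\mathbb X$, this gives $h_1\equiv h_2$, whence $f_1=f_2$.

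The principal difficulty is verifying strict convexity of $W$ on $\mathrm{GL}_+(2)$ for the full range $p\in(1,\infty)$: joint convexity via the perspective representation is immediate, but strictness in the matrix variable requires showing that the radial degeneracy of the perspective does not translate into a degeneracy of $W$ as a function of $A$, which is the content of the Hessian computation in singular-value coordinates. A secondary but genuine subtlety is the topological step, which is essential because $\mathbb Y$ is only assumed simply connected (not convex), so the straight-line interpolation of two maps into $\mathbb Y$ need not remain inside $\mathbb Y$ \emph{a priori}; the degree argument, relying on the Jordan structure of $\partial\mathbb Y$ and positivity of $J_{h_s}$, is what salvages admissibility.
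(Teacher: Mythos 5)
There is a fatal gap at the very first analytic step: the integrand $W(A)=\|A\|^{2p}/(\det A)^{p-1}$ is \emph{not} convex on $\{\det A>0\}$ for any $p>1$; it is only polyconvex. Writing $A\leftrightarrow(u,v)=(a_z,a_{\bar z})\in\mathbb C^2$ (a real-linear change of coordinates), one has $W=2^p(|u|^2+|v|^2)^p/(|u|^2-|v|^2)^{p-1}$, and $\partial W/\partial|u|$ has the sign of $|u|^2-(2p-1)|v|^2$. Wherever $|v|^2<|u|^2<(2p-1)|v|^2$ (equivalently $K>p/(p-1)$), $W$ is strictly decreasing in $|u|$, so its second derivative along the direction tangent to the circle $\{|u|=\mathrm{const}\}$ is negative and convexity fails. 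Concretely, for $p=2$ take $u=1\pm 0.1\,i$, $v=0.9$: the endpoints give $(1.82)^2/0.2\approx 16.56$ while the midpoint $u=1$, $v=0.9$ gives $(1.81)^2/0.19\approx 17.24$, violating midpoint convexity. Your proposed verification ``in singular-value coordinates'' would miss this: the restriction of $W$ to diagonal matrices \emph{is} convex; the degeneracy of the perspective representation reappears exactly in the rotational directions. This is not a repairable detail --- the non-convexity occurs precisely at points of large distortion, which is the regime the theorem must handle, and it is the reason the literature (and this paper) works with inner variations and Hopf differentials rather than outer variations and convexity.

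A second, independent error is the claim that $\mathrm{GL}_+(2)$ is a convex cone. It is a cone but not convex: $\det$ is an indefinite quadratic form of signature $(2,2)$ (namely $|u|^2-|v|^2$), so $\{\det>0\}$ is not convex (e.g.\ $I$ and $-I=R_\pi$ both lie in it, their midpoint is $0$). Hence $J_{h_s}>0$ is not guaranteed, the interpolant need not be a local diffeomorphism, and the degree argument for admissibility of $h_s$ collapses with it. The paper's route is entirely different: it reduces uniqueness of the $\mathscr{E}_p$-minimizer, via a H\"older inequality linking $\mathscr{E}_p$ to the weighted Dirichlet energy $\int_{\mathbb X}K_f^{p-1}(g)|Dg|^2$, to uniqueness for that auxiliary energy, which is then proved through the integral identity of Lemma \ref{key-lemma} for the transition map $f=H^{-1}\circ h$ and the length-minimizing property of vertical trajectories of the holomorphic Hopf differential. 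You would need to abandon the convexity strategy and adopt some version of that (or another genuinely non-convex) mechanism.
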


Associated with the mean distortions are the polyconvex energy functionals for the inverse mapping $h = f^{-1} \colon  \mathbb X \xlongrightarrow{\rm onto} \mathbb Y$,~\cite{astala2005extremal, hencl2006regularity, hencl2007homeomorphisms,onninen2006regularity},
\begin{align}\label{p-energy}
\begin{aligned}
\mathscr E_p[f] &=  \frac{1}{2}\int_\mathbb X  K^{p-1}_h(x) \, ||Dh(x)||^2 \, dx,\qquad 1\le p<\infty \\
&=:\mathbb{E}_p[h]. 
\end{aligned}
\end{align}
Note that for $p=1$ and $n=2$ there is a fascinating connection, discovered in~\cite{astala2005extremal}, between minimizers of $L^1$-mean distortion and harmonic mappings. In particular, if $\mathbb{Y}$ is convex, given a homeomorphism $f_0:\overline{\mathbb{X}}\xlongrightarrow{\rm onto}\overline{\mathbb{Y}}$ with $\mathscr{E}_{p}[f_0]<\infty$, the Rad\'o-Kneser-Choquet theorem \cite{duren2004harmonic} asserts that there exists a $C^\infty$-diffeomorphism $g$ which solves the minimization problem \eqref{con-D-p}, and the invese $f$ is harmonic in $\mathbb{X}$. If the target $\mathbb{Y}$ is not convex there always exists a boundary homeomorphism $h_0: \partial\mathbb{X}\xrightarrow{\rm onto}\partial\mathbb{Y}$ whose harmonic extension takes points in
$\mathbb{X}$ beyond $\overline{\mathbb{Y}}$, that is, the minimization problem \ref{con-Y-p} fails in general if $p=1$. This was already observed by Choquet \cite{choquet1945type}, see also \cite{kalaj2010invertible}. 

The outer variation of a given energy
integral leads to the Euler-Lagrange equation. This equation is not available when the energy integral is restricted to homeomorphisms. When dealing with Sobolev homeomorphisms, one should perform the inner variation of given integral (see \cite{bauman1991maximal,cristina2014hopf,iwaniec2013lipschitz,sandier2003limiting}, etc.,  simply a change of variable in the interpreted variables. 

The inner variation of the energy $\mathbb{E}_p[h]$ leads to the equation

\begin{align}\label{p-hopf}
\frac{\partial}{\partial \bar z}\left(K_h^{p-1}h_z\overline{h_{\bar z}}\right)=0.
\end{align}
This suggests us to study an auxiliary minimization problem. Given a positive and continuous function $\Phi: \overline{\mathbb{Y}}\to (0,\infty]$, we define the {\em $\Phi$-weighted Dirichlet energy}
\begin{align}\label{phi-energy}
\mathscr{E}^{\Phi}_{\mathbb{X}}[h]:=\int_{\mathbb{X}}\Phi(h)|Dh|^2dz
\end{align}
subjected to homeomorphisms $h:\overline{\mathbb{X}}\xlongrightarrow{\rm onto}\overline{\mathbb{Y}}$ with $h\in W^{1,2}(\mathbb{X},\mathbb{R}^2)$. This gives rise to the so-called {\em $\Phi$-Hopf-Laplace
equation},
\begin{align}\label{phi-laplacian}
	\frac{\partial}{\partial\bar z}\Big(\Phi(h(z))h_z\overline{h_{\bar z}}\Big)=0,
\end{align}
for $h\in W^{1,2}(\mathbb{X},\mathbb{Y})$ with finite $\Phi$-weighted Dirichlet energy. 

While completing this manuscript, we became aware of the recent work by Martin and Yao~\cite{MartinYao}, where related uniqueness results for extremal mappings of finite distortion are established. Our approach, developed independently, is new and, hopefully,  of independet interest. In fact, we prove that  any diffeomorphic solution is
\[
h \colon \overline{\mathbb{X}} \xrightarrow{\rm onto} \overline{\mathbb{Y}}, \quad h = f_0^{-1} \text{ on } \partial \mathbb{X},
\]
to the inner-variational equation \eqref{p-hopf} is the unique minimizer of the $\Phi$-weighted Dirichlet energy functional, where $\Phi(z) = K_{h^{-1}}^{p-1}(z)$. This uniqueness result for the weighted energy implies, in particular, that $h$ is also the unique solution to the inner-variational equation. We hope that this new perspective contributes further insight toward Conjecture \ref{conj}.\\

\noindent{\bf Acknowledgements.} The author thanks his thesis advisor Jani Onninen for his ideas and all his help in improving the manuscript.

\section{Prerequisites}
In this section we review from \cite{strebel1984quadratic} useful concepts and results about
holomorphic quadratic differentials $\varphi(z)dz\otimes dz$ and their trajectories. 
\subsection{An Integral Identity}
First, we show a powerful identity that will play an important role in our proof. The idea of the proof is from \cite[Lemma 8.1]{iwaniec2013mappings}.
\begin{lem}\label{key-lemma}
Let $\mathbb{X}$ and $ \mathbb{Y}$ be bounded domains in $\mathbb{C}$. Suppose that $h: \mathbb{X}\xrightarrow{\rm onto}\mathbb{Y}$
and $H:\mathbb{X}\xrightarrow{\rm onto}\mathbb{Y}$ are orientation preserving $C^\infty$-diffeomorphisms of finite $\Phi$-weighted Dirichlet energy. Define $f:= H^{-1}\circ h: \mathbb{X}\xrightarrow{\rm onto}\mathbb{X}$. Then we have
\begin{equation}\label{Integral-Identity}
\begin{aligned}
	\mathscr{E}^{\Phi}_{\mathbb{X}}[H]-\mathscr{E}^{\Phi}_{\mathbb{X}}[h]
	&=4\int_{\mathbb{X}}
	\left[\frac{|f_z-\gamma(z)f_{\bar z}|^2}{|f_z|^2-|f_{\bar z}|^2}-1\right]\Phi(h)|h_zh_{\bar z}|dz\\
&\quad +4\int_{\mathbb{X}}\Phi(h)\cdot\frac{(|h_z|-|h_{\bar z}|)^2|f_{\bar z}|^2}{|f_z|^2-|f_{\bar z}|^2}dz,
\end{aligned}
\end{equation}
where
\begin{equation*}
\gamma(z)=
\left\{
\begin{aligned}
	&h_z\overline{h_{\bar z}}|h_z\overline{h_{\bar z}}|^{-1}\quad {\rm if}\ h_zh_{\bar z}\neq 0\\
	&0 \qquad\qquad\qquad{\rm otherwise}.
\end{aligned}
\right.
\end{equation*}
The integrals in \eqref{Integral-Identity} converge.
\end{lem}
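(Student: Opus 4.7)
The plan is to compute the left-hand side $\mathscr{E}^\Phi_{\mathbb{X}}[H] - \mathscr{E}^\Phi_{\mathbb{X}}[h]$ directly by exploiting the decomposition $H = h\circ f^{-1}$, and then to match the result with the right-hand side of \eqref{Integral-Identity} through an algebraic identity. The two main tools are the chain rule for Wirtinger derivatives and the change of variables formula, both valid since $f$ and $H$ are $C^\infty$-diffeomorphisms with positive Jacobians. Setting $A := (H_w)\circ f$ and $B := (H_{\bar w})\circ f$, the chain rule applied to $h = H\circ f$ gives
\[ h_z = A\, f_z + B\,\overline{f_{\bar z}}, \qquad h_{\bar z} = A\, f_{\bar z} + B\,\overline{f_z}, \]
from which a short calculation yields
\[ |h_z|^2 + |h_{\bar z}|^2 = (|A|^2 + |B|^2)(|f_z|^2 + |f_{\bar z}|^2) + 4\,\mathrm{Re}(A\bar B\, f_z f_{\bar z}). \]

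Next, the change of variables $w = f(z)$ with Jacobian $J_f = |f_z|^2 - |f_{\bar z}|^2 > 0$ transforms $\mathscr{E}^\Phi_{\mathbb{X}}[H]$ into $2\int_{\mathbb{X}}\Phi(h)(|A|^2+|B|^2)(|f_z|^2-|f_{\bar z}|^2)\,dz$, and subtracting $\mathscr{E}^\Phi_{\mathbb{X}}[h] = 2\int_{\mathbb{X}}\Phi(h)(|h_z|^2+|h_{\bar z}|^2)\,dz$ gives
\[ \mathscr{E}^\Phi_{\mathbb{X}}[H] - \mathscr{E}^\Phi_{\mathbb{X}}[h] = -4\int_{\mathbb{X}} \Phi(h)\bigl[(|A|^2+|B|^2)|f_{\bar z}|^2 + 2\,\mathrm{Re}(A\bar B\, f_z f_{\bar z})\bigr]\,dz. \]
It remains to show that the right-hand side of \eqref{Integral-Identity} equals this expression. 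Expanding $|f_z - \gamma f_{\bar z}|^2 = |f_z|^2 + |f_{\bar z}|^2 - 2\,\mathrm{Re}(\bar\gamma f_z \overline{f_{\bar z}})$ and $(|h_z| - |h_{\bar z}|)^2 = |h_z|^2 + |h_{\bar z}|^2 - 2|h_z h_{\bar z}|$, and using the key observation that $|h_z h_{\bar z}|\bar\gamma = \overline{h_z}h_{\bar z}$, the two integrands of \eqref{Integral-Identity} combine into
\[ \frac{4\Phi(h)}{|f_z|^2 - |f_{\bar z}|^2}\Bigl[(|h_z|^2+|h_{\bar z}|^2)|f_{\bar z}|^2 - 2\,\mathrm{Re}\bigl(\overline{h_z}h_{\bar z}\,f_z\overline{f_{\bar z}}\bigr)\Bigr]. \]
Substituting the chain-rule formulas for $h_z, h_{\bar z}$ into $\overline{h_z}h_{\bar z}$, the bracket simplifies to $-(|f_z|^2-|f_{\bar z}|^2)\bigl[(|A|^2+|B|^2)|f_{\bar z}|^2 + 2\,\mathrm{Re}(A\bar B f_z f_{\bar z})\bigr]$, cancelling the denominator and matching the previous display.

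Convergence of each integral on the right-hand side of \eqref{Integral-Identity} follows from the smoothness of $f = H^{-1}\circ h$ on $\overline{\mathbb{X}}$, which provides a uniform positive lower bound on $J_f$ together with a uniform upper bound on $K_f$; combined with the finiteness of $\mathscr{E}^\Phi_{\mathbb{X}}[h]$ and $\mathscr{E}^\Phi_{\mathbb{X}}[H]$, this yields absolute integrability of each piece. The main obstacle is the algebraic bookkeeping needed to match the two sides: real parts of products of Wirtinger derivatives proliferate quickly and must be grouped carefully. The particular shape of the right-hand side of \eqref{Integral-Identity}, with the ratio $|f_z-\gamma f_{\bar z}|^2/(|f_z|^2-|f_{\bar z}|^2)$ and the unimodular factor $\gamma$, is dictated by its intended role in the uniqueness argument, so that finding the right grouping is as much a matter of reverse-engineering from the target form as of straightforward computation.
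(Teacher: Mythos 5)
Your derivation of the identity itself is correct, and it is essentially the paper's computation run in the opposite direction: the paper applies the chain rule to $H=h\circ f^{-1}$ and changes variables in $\mathscr{E}^{\Phi}_{\mathbb{X}}[H]$ to land on the quotient form with $J_f=|f_z|^2-|f_{\bar z}|^2$ in the denominator, whereas you apply the chain rule to $h=H\circ f$ and keep $A=H_w\circ f$, $B=H_{\bar w}\circ f$ abstract. The two bookkeepings are algebraically equivalent; your reduction of both sides to $-4\int_{\mathbb{X}}\Phi(h)\bigl[(|A|^2+|B|^2)|f_{\bar z}|^2+2\,\mathrm{Re}(A\bar B f_z f_{\bar z})\bigr]dz$ checks out, including the key cancellation $|h_zh_{\bar z}|\,\overline{\gamma}=\overline{h_z}\,h_{\bar z}$.

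The genuine gap is in your convergence argument. You invoke smoothness of $f=H^{-1}\circ h$ on $\overline{\mathbb{X}}$ together with a uniform positive lower bound on $J_f$ and a uniform upper bound on $K_f$. The hypotheses give none of this: $h$ and $H$ are only $C^\infty$-diffeomorphisms of the \emph{open} set $\mathbb{X}$ onto $\mathbb{Y}$ with finite weighted energy, so $f$ is smooth only in $\mathbb{X}$ and its Jacobian and distortion may degenerate at $\partial\mathbb{X}$; the paper explicitly warns that $f$ need not even have finite Dirichlet energy, so no uniform bounds are available. The correct argument uses only the finiteness of the two energies. Set $u:=2\Phi(h)(|A|^2+|B|^2)J_f\ge 0$ and $v:=2\Phi(h)(|h_z|^2+|h_{\bar z}|^2)\ge 0$, so that $\int_{\mathbb{X}}u=\mathscr{E}^{\Phi}_{\mathbb{X}}[H]<\infty$ and $\int_{\mathbb{X}}v=\mathscr{E}^{\Phi}_{\mathbb{X}}[h]<\infty$. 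Let $P$ and $Q$ denote the first and second integrands in \eqref{Integral-Identity}; your pointwise identity gives $P+Q=u-v$. Since $J_f>0$ and $|f_z-\gamma f_{\bar z}|^2\ge 0$, the bracket in $P$ is at least $-1$, whence $P\ge -4\Phi(h)|h_zh_{\bar z}|\ge -v$; therefore $0\le Q=(u-v)-P\le u$ is integrable, and then $|P|\le |u-v|+Q\le 2u+v$ is integrable as well. This is the substance behind the paper's remark that convergence is a consequence of the finite $\Phi$-weighted Dirichlet energy of $h$ and $H$, and it is what your proof should say in place of the boundary-regularity claim.
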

\begin{proof}
It is worth noting that $f:\mathbb{X}\xrightarrow{\rm onto}\mathbb{X}$ need not have finite energy. The convergence of the integrals, not obvious at first glance, is a consequence of the finite $\Phi$-weighted Dirichlet energy assumption imposed on the mappings $h$ and $H$.

We begin with the chain rule applied to 
$H=h\circ f^{-1}: \mathbb{X}\xlongrightarrow{\rm onto}\mathbb{Y}$,
\begin{align*}
	\frac{\partial H(\omega)}{\partial\omega}=h_z(z)\frac{\partial f^{-1}}{\partial\omega}+h_{\bar z}(z)\frac{\overline{\partial f^{-1}}}{\partial\overline\omega}\\
	\frac{\partial H(\omega)}{\partial\overline\omega}=h_z(z)\frac{\partial f^{-1}}{\partial\overline\omega}+h_{\bar z}(z)\frac{\overline{\partial f^{-1}}}{\partial\omega},	
\end{align*}
where $\omega=f(z)$. We express the complex partial derivatives of $f^{-1}: \mathbb{X}\to\mathbb{X}$ at
$\omega$ in terms $f_z(z)$ and $f_{\bar z}(z)$ at $z=f^{-1}(\omega)$,
\begin{align*}
	\frac{\partial f^{-1}}{\partial\omega}
	=\frac{\overline{f_z(z)}}{J(z,f)}\quad {\rm and}\quad
	\frac{\partial f^{-1}}{\partial\overline\omega}
	=-\frac{f_{\bar z}(z)}{J(z,f)}.
\end{align*}
Note that the Jacobian determinant $J(z,f)=|f_z|^2-|f_{\bar z}|^2$ is strictly positive. These expressions yield
\begin{align*}
	\frac{\partial H}{\partial\omega}=\frac{h_z\overline{f_z}-h_{\bar z}\overline{f_{\bar z}}}{|f_z|^2-|f_{\bar z}|^2}\quad{\rm and}\quad
	\frac{\partial H}{\partial\overline\omega}
	=\frac{h_{\bar z}{f_z}-h_{ z}f_{\bar z}}{|f_z|^2-|f_{\bar z}|^2}
\end{align*}
Next, note that $J_H=J_h/J_f$ and we compute the energy of $H$ over the set $f(\mathbb{X})=\mathbb{X}$ by substituting $\omega=f(z)$,
\begin{equation}\label{Energy-H}
	\begin{aligned}
		\mathscr{E}^{\Phi}_{f(\mathbb{X})}[H]
		&=2\int_{f(\mathbb{X})}\Phi(H(\omega))\Big(|H_\omega|^2+|H_{\bar\omega}|^2\Big)d\omega\\
		&=2\int_{\mathbb{X}}\Phi(h)\frac{|h_z\overline{f_z}-h_{\bar z}\overline{f_{\bar z}}|^2+|h_{\bar z}{f_z}-h_{ z}f_{\bar z}|^2}{|f_z|^2-|f_{\bar z}|^2}dz
	\end{aligned}
\end{equation}
On the other hand, the energy of $h$ over the set $\mathbb{X}$ equals
\begin{equation}\label{Energy-h}
	\begin{aligned}
		\mathscr{E}^{\Phi}_{\mathbb{X}}[h]
		&=2\int_{\mathbb{X}}\Phi(h)
		\Big(|h_z|^2+|h_{\bar z}|^2\Big)dz.
	\end{aligned}
\end{equation}
The desired formula follows by subtracting these two integrals,
\begin{equation}\label{minus}
\begin{aligned}
\mathscr{E}^{\Phi}_{\mathbb{X}}[H]-\mathscr{E}^{\Phi}_{\mathbb{X}}[h]
&=4\int_{\mathbb{X}}\Phi(h)\cdot\frac{\Big(|h_z|^2+|h_{\bar z}|^2\Big)\cdot|f_{\bar z}|^2-2{\rm Re}[h_z\overline{h_{\bar z}}\overline{f_z}f_{\bar z}]}
{|f_z|^2-|f_{\bar z}|^2}dz\\
&=4\int_{\mathbb{X}}\Phi(h)\cdot\frac{2|h_zh_{\bar z}||f_{\bar z}|^2-2{\rm Re}[h_z\overline{h_{\bar z}}\overline{f_z}f_{\bar z}]}
{|f_z|^2-|f_{\bar z}|^2}dz\\
&\quad+4\int_{\mathbb{X}}\Phi(h)\cdot\frac{(|h_z|-|h_{\bar z}|)^2|f_{\bar z}|^2}{|f_z|^2-|f_{\bar z}|^2}dz\\
&=4\int_{\mathbb{X}}
\left[\frac{|f_z-\gamma(z)f_{\bar z}|^2}{|f_z|^2-|f_{\bar z}|^2}-1\right]\Phi(h)|h_zh_{\bar z}|dz\\
&\quad+4\int_{\mathbb{X}}\Phi(h)\cdot\frac{(|h_z|-|h_{\bar z}|)^2|f_{\bar z}|^2}{|f_z|^2-|f_{\bar z}|^2}dz
\end{aligned}
\end{equation}
\end{proof}
\subsection{Holomorphic quadratic differentials}
Let $\varphi(z)dz\otimes dz$ be a
holomorphic quadratic differential in $\mathbb{X}$ with isolated zeros, called {\em critical points}. Through every noncritical point there pass two $C^\infty$-smooth
orthogonal arcs. A {\em vertical arc} is a $C^\infty$-smooth curve $\gamma=\gamma(t)$, $a<t<b$, along which
\begin{align*}
	[\gamma'(t)]^2\phi(\gamma(t))<0,\qquad a<t<b.
\end{align*}
A {\em vertical trajectory} of $\phi$ in $\mathbb{X}$ is a maximal vertical arc, that is, not properly contained in any other vertical arc. The {\em horizontal arcs} and
{\em horizontal trajectories} are defined in an exactly similar way, via the opposite inequality. Through every noncritical point of $\phi$ there passes a unique vertical (horizontal) trajectory. A trajectory whose closure contains a critical point of $\phi$ is called a {\em critical trajectory}. There are at most a countable number of critical trajectories.

Every noncritical vertical trajectory $\gamma\subset\mathbb{U}$ in a simply connected
domain $\mathbb{U}$ is a {\em cross cut}, see  \cite[Theorem 15.1]{strebel1984quadratic}.
\begin{lem}\label{minimum-vertical-arc}
Consider a vertical arc $\gamma\subset U$ in a simply connected domain $U$. Let $\beta$ be any locally rectifiable curve in $U$ which contains the endpoints of $\gamma$. Then
\begin{align}
	\int_\gamma|\varphi|^{1/2}|dz|\le
	\int_\beta|\varphi|^{1/2}|dz|.
\end{align}
\end{lem}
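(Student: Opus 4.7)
The plan is to introduce the natural parameter $\Phi(z) = \int \sqrt{\varphi(w)}\,dw$ of the quadratic differential and reduce the estimate to a Euclidean one, where vertical line segments are trivially length-minimizing among curves with the same endpoints.

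First I would reduce to the case where $\beta$ avoids the critical points of $\varphi$; the vertical arc $\gamma$ already does so by definition. Since the critical set is discrete and $|\varphi|^{1/2} \sim |z-z_0|^{k/2}$ near a zero of order $k$ is locally bounded, replacing a passage of $\beta$ through a critical point by a small detour changes $\int_\beta |\varphi|^{1/2}|dz|$ by an arbitrarily small amount. Next, on a simply connected subdomain $V \subseteq U$ avoiding critical points and containing both $\gamma$ and $\beta$, a single-valued holomorphic branch of $\sqrt{\varphi}$ exists, and $\Phi$ is well-defined and holomorphic with $|d\Phi| = |\varphi|^{1/2}\,|dz|$. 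The defining inequality $(\gamma')^2\varphi(\gamma)<0$ of a vertical arc forces $\sqrt{\varphi}\,\gamma'$ to be purely imaginary, so $\Phi \circ \gamma$ parameterizes a vertical Euclidean segment; denoting the endpoints of $\gamma$ by $p_0, p_1$, this yields
\[
\int_\gamma |\varphi|^{1/2}\,|dz| \;=\; |\Phi(p_1)-\Phi(p_0)| \;=\; \bigl|\mathrm{Im}(\Phi(p_1)-\Phi(p_0))\bigr|.
\]
For $\beta$, the pointwise inequality $|\mathrm{Im}(\sqrt{\varphi}\,dz)| \leq |\sqrt{\varphi}|\,|dz|$ together with the single-valuedness of $\mathrm{Im}\,\Phi$ on $V$ gives
\[
\int_\beta |\varphi|^{1/2}\,|dz| \;\geq\; \Bigl|\int_\beta \mathrm{Im}(\sqrt{\varphi}\,dz)\Bigr| \;=\; \bigl|\mathrm{Im}(\Phi(p_1)-\Phi(p_0))\bigr|,
\]
and the lemma follows by combining the two displays.

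The main obstacle I anticipate is the construction of the simply connected subdomain $V$. In general, $U \setminus \{\text{critical points of }\varphi\}$ is not simply connected, and if $\beta$ winds around a zero of $\varphi$ of odd order then $\sqrt{\varphi}$ is genuinely branched on loops enclosing it, so no simply connected subdomain of $U$ containing $\beta$ can avoid every critical point. I would resolve this either by slitting $U$ along disjoint arcs joining each enclosed critical point to $\partial U$ and tracking the sign reversals of $\sqrt{\varphi}$ across the slits (which drop out of the absolute-value estimate on $\beta$), or by lifting $\gamma$ and $\beta$ to the universal cover of $U$ minus the critical set, where $\Phi$ becomes globally single-valued and the Euclidean length comparison applies verbatim to the lifts.
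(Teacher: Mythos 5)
The paper offers no proof of this lemma at all --- it is quoted from Strebel with a reference to [Theorem 16.1] of that book --- so the only question is whether your argument is correct. Your natural-parameter scheme is the right starting point and does prove the lemma whenever $\beta$ is homotopic to $\gamma$ rel endpoints in the complement of the zeros of $\varphi$: there a single branch of $\Phi=\int\sqrt{\varphi}$ serves both curves with matching endpoint values, and your two displayed estimates are correct. The genuine gap is exactly the case you flag at the end, and neither of your proposed fixes closes it. If $\beta$ winds around an odd-order zero, the branch of $\sqrt{\varphi}$ continued along $\beta$ from $p_0$ arrives at $p_1$ with the opposite sign, so the continuation produces a value $\widetilde\Phi(p_1)\neq\Phi(p_1)$; your estimate then bounds $\int_\beta|\varphi|^{1/2}|dz|$ below by $|\mathrm{Im}(\widetilde\Phi(p_1)-\Phi(p_0))|$, which need not dominate $|\mathrm{Im}(\Phi(p_1)-\Phi(p_0))|$ --- the sign reversals do not drop out. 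Concretely, for $\varphi(z)=z$ one has $\Phi(z)=\tfrac{2}{3}z^{3/2}$; let $\gamma$ be the vertical arc from $p_0=\Phi^{-1}(\tfrac23-i\epsilon)$ to $p_1=\Phi^{-1}(\tfrac23+i\epsilon)$ and let $\beta$ join $p_0$ to $p_1$ by circling the origin once. Then $\widetilde\Phi(p_1)=-\Phi(p_1)$ and $\mathrm{Im}\bigl(\widetilde\Phi(p_1)-\Phi(p_0)\bigr)=0$, so your lower bound degenerates to $0$ while the claimed bound is $2\epsilon$. The universal-cover variant fails for the same reason: the lifts of $\gamma$ and $\beta$ starting at the same point over $p_0$ terminate at \emph{different} points over $p_1$, so the Euclidean comparison between the lifts says nothing about the original curves.

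What is missing is precisely the content of the lemma at a zero: one must show that detouring around a zero never shortens a curve, and this is where holomorphy of $\varphi$ enters (a zero of order $n\ge1$ is a cone point of angle $(n+2)\pi\ge 3\pi$ for the flat metric $|\varphi|^{1/2}|dz|$; for a differential with a pole the statement would be false). The standard route, which is essentially Strebel's, avoids branches of $\Phi$ altogether: each noncritical horizontal trajectory through an interior point of $\gamma$ is a cross-cut of the simply connected domain $U$ meeting the vertical arc $\gamma$ exactly once, hence separating $p_0$ from $p_1$, and so must be crossed by every curve joining $p_0$ to $p_1$; integrating the globally well-defined (sign-free) transverse measure $|\mathrm{Im}(\sqrt{\varphi}\,dz)|$ over this family of leaves gives $\int_\beta|\varphi|^{1/2}|dz|\ge\int_\beta|\mathrm{Im}(\sqrt{\varphi}\,dz)|\ge\int_\gamma|\varphi|^{1/2}|dz|$. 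If you prefer to keep your structure, you would instead have to prove the local statement that a cone point of angle at least $3\pi$ admits no shortcuts (a nonpositive-curvature argument); branch bookkeeping alone cannot substitute for either step.
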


For the proof of this lemma we refer to \cite[Theorem 16.1]{strebel1984quadratic}.
\begin{lem}[Fubini-like integration formula]
Let $\varphi(z)dz\otimes dz$ be a
holomorphic quadratic differential in a simply connected domain $U$, $\varphi\not\equiv0$. Suppose that $F$ and $G$ are measurable functions in $\mathbb{U}$ such that
\begin{align}
	\int_{U}|\varphi(z)||F(z)|dz<\infty\quad{\rm and} \int_{U}|\varphi(z)||G(z)|dz<\infty.
\end{align}
Then for almost every vertical trajectory $\gamma$ of $\varphi(z)dz\otimes dz$, we have
\begin{align}
	\int_{\gamma}|\varphi(z)|^{1/2}|F(z)||dz|<\infty\quad{\rm and} \int_{\gamma}|\varphi(z)|^{1/2}|G(z)||dz|<\infty.
\end{align}

\quad$\bullet$ If 
\begin{align}
	\int_{\gamma}|\varphi(z)|^{1/2}F(z)|dz|
	=\int_{\gamma}|\varphi(z)|^{1/2}G(z)|dz|,
\end{align}
for almost every vertical trajectory $\gamma$ of $\varphi(z)dz\otimes dz$ then
\begin{align}
	\int_{\mathbb{U}}|\varphi(z)|F(z)dz
	=\int_{\mathbb{U}}|\varphi(z)|G(z)dz.
\end{align}

\quad$\bullet$ If 
\begin{align}\label{Fubini's 1}
	\int_{\gamma}|\varphi(z)|^{1/2}F(z)|dz|
	\le\int_{\gamma}|\varphi(z)|^{1/2}G(z)|dz|,
\end{align}
for almost every vertical trajectory $\gamma$ of $\varphi(z)dz\otimes dz$ then
\begin{align}\label{Fubini's 2}
	\int_{\mathbb{U}}|\varphi(z)|F(z)dz
	\le\int_{\mathbb{U}}|\varphi(z)|G(z)dz.
\end{align}
\end{lem}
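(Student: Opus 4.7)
The plan is to use the natural parameter associated to $\varphi\,dz\otimes dz$ to reduce the lemma to the ordinary Fubini theorem in the Euclidean $w$-plane. Let $E=\{z\in U:\varphi(z)=0\}$ be the (discrete) zero set of $\varphi$. Around any $z_0\in U\setminus E$, on a simply connected neighborhood $V$ one can choose a branch of $\sqrt{\varphi}$ and define the natural parameter
\begin{equation*}
w(z)=u(z)+iv(z):=\int_{z_0}^z\sqrt{\varphi(\zeta)}\,d\zeta,
\end{equation*}
which is a local biholomorphism satisfying $dw\otimes dw=\varphi\,dz\otimes dz$. In the $w$-coordinate one has the pointwise identities $|\varphi(z)|\,dz=du\,dv$ and $|\varphi(z)|^{1/2}|dz|=|dv|$ along a vertical trajectory, while vertical trajectories of $\varphi$ correspond exactly to vertical Euclidean segments $\{u=\mathrm{const}\}$.

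I would then cover $U\setminus(E\cup\mathcal{C})$ by countably many such natural-parameter charts $V_k$, where $\mathcal{C}$ denotes the union of the at most countably many critical trajectories. Since $E\cup\mathcal{C}$ has zero area in $U$ and intersects every noncritical vertical trajectory in a set of $|dv|$-measure zero, the local Fubini identity
\begin{equation*}
\int_{V_k}|\varphi|F\,dz=\iint_{w(V_k)}F\circ w^{-1}\,du\,dv=\int\left(\int_{\gamma_u\cap V_k}|\varphi|^{1/2}F\,|dz|\right)du
\end{equation*}
patches together to the global identity
\begin{equation*}
\int_U|\varphi|F\,dz=\int_{\mathcal{V}}\left(\int_\gamma|\varphi|^{1/2}F\,|dz|\right)d\mu(\gamma),
\end{equation*}
where $\mathcal{V}$ is the space of noncritical vertical trajectories equipped with the transverse measure $\mu$ induced by $|du|$. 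Applying the identity with $F$ replaced by $|F|$ (and then $|G|$), the finite-integral hypothesis combined with Fubini--Tonelli yields the first claim. Applying it to $F$ and $G$ themselves and invoking the trajectory-wise identity (respectively inequality) for $\mu$-a.e.\ $\gamma$, the remaining two claims follow by integrating in $\mu$.

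The main technical point is to give a consistent global meaning to ``$\mu$-almost every vertical trajectory'' and to the transverse measure itself, since the branch of $\sqrt{\varphi}$, and hence the natural parameter $w$, is only defined locally. The resolution is that on overlaps any two natural charts differ by a transformation of the form $w\mapsto\pm w+c$; such a map preserves both vertical lines and the $1$-form $|du|$, so the transverse measures agree on overlaps and the family of noncritical vertical trajectories carries an intrinsic $\sigma$-finite measure $\mu$. Once this is in place, the three conclusions reduce to standard Fubini--Tonelli applied chart by chart and summed, together with the observation that the excluded set $E\cup\mathcal{C}$ is negligible both in $U$ and along each noncritical trajectory.
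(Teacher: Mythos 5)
The paper gives no proof of this lemma at all, deferring entirely to Strebel's monograph; your argument via the natural parameter $w=\int\sqrt{\varphi}\,dz$, the induced transverse measure $|du|$ on the family of noncritical vertical trajectories, and Fubini--Tonelli in the $w$-plane is precisely the standard proof found in that reference, so you are taking essentially the same route. The argument is correct, including the key point that chart transitions of the form $w\mapsto\pm w+c$ make the transverse measure, and hence the phrase ``almost every vertical trajectory,'' globally well defined.
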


Again, for the proof of this lemma, we refer to \cite{strebel1984quadratic}.

Given a quadratic holomorphic differential $\varphi(z) dz\otimes dz$ we define two partial differential operators, called the {\em horizontal} and {\em vertical derivatives}
\begin{align*}
	\partial_{\rm H}=\frac{\partial }{\partial z}+\frac{\varphi}{|\varphi|}\frac{\partial}{\partial\bar z}\quad{\rm and}\quad 
	\partial_{\rm v}=\frac{\partial }{\partial z}-\frac{\varphi}{|\varphi|}\frac{\partial}{\partial\bar z}.
\end{align*}
If $h$ satisfies the $\Phi$-Hopf-Laplace equation $\Phi(h)h_z\overline{h_{\bar z}}=\varphi$, then the horizontal and vertical trajectories of $\varphi(z)dz\otimes dz$ are the lines of maximal and minimal stretch for $h$. Precisely, the following identities hold:
\begin{equation}\label{partial-Jh-1}
	\begin{aligned}
	&|\partial_{\rm H}h|=|h_z|+|h_{\bar z}|,\quad |\partial_{\rm v}h|=|h_z|-|h_{\bar z}|,\\
	&|\partial_{\rm H}h|\cdot|\partial_{\rm v}h|=|J_h|,\quad
	\Phi(h)\left(|\partial_{\rm H}h|^2-|\partial_{\rm v}h|^2\right)=4|\varphi|.
	\end{aligned}
\end{equation}
Here and after $J_h={\rm det} Dh$. As a consequence
\begin{align}\label{partial-Jh-2}
	|\partial_{\rm v}h|^2\le J_h\le|\partial_{\rm H}h|^2.
\end{align}
\section{Uniqueness of weighted Dirichlet energy minimizers}

In this section, we prove that the minimizer of the $\Phi$-weighted energy \eqref{phi-energy} is unique. Let $ g_0 \colon \overline{\mathbb{X}} \xrightarrow{\rm onto} \overline{\mathbb{Y}}$ be a homeomorphism, diffeomorphic in $\mathbb{X}$, of finite distortion such that $ \mathscr{E}^\Phi_\mathbb{X}[g_0] < \infty$. In a manner similar to \eqref{homeomorphism,f_0,p} and \eqref{diffeomorphism,f_0,p}, we regard $g_0$ as the prescribed boundary data and define the corresponding classes of homeomorphic and diffeomorphic mappings as
\begin{equation}\label{homeomorphism,g_0,Phi}
\mathscr{H}^\Phi_{g_0}(\overline{\mathbb{X}}, \overline{\mathbb{Y}}) := \left\{ g \in W^{1,1}_{\mathrm{loc}}(\mathbb{X}, \mathbb{C}) : \mathscr{E}^\Phi_{\mathbb{X}}[g] < \infty,\ 
g|_{\partial \mathbb{X}} = g_0|_{\partial \mathbb{X}},\ g:\overline{\mathbb{X}}\xrightarrow{\rm onto}\overline{\mathbb{Y}}\ {\rm is\ homeomorphic} \right\}
\end{equation}
and
\begin{equation}\label{diffeomorphism,g_0,Phi}
{\rm Diff}^\Phi_{g_0}(\overline{\mathbb{X}}, \overline{\mathbb{Y}}):=\left\{g\in\mathscr{H}^p_{g_0}(\overline{\mathbb{X}}, \overline{\mathbb{Y}}): g:\mathbb{X}\xrightarrow{\rm onto}\mathbb{Y}\ {\rm is\ a\ }
C^\infty-{\rm diffeomorphism} \right\}.
\end{equation}
Clearly, $\mathscr{H}^\Phi_{g_0}(\overline{\mathbb{X}}, \overline{\mathbb{Y}})$
and ${\rm Diff}^\Phi_{g_0}(\overline{\mathbb{X}}, \overline{\mathbb{Y}})$ are not empty.

To prove the uniqueness of the minimizer of the 
$\Phi$-weighted Dirichelt energy, we make the conjecture as follows:
Given a continuous function $\Phi: \mathbb{Y}\to (0,\infty]$, there exists some $h\in{\rm Diff}^\Phi_{g_0}(\overline{\mathbb{X}}, \overline{\mathbb{Y}})$ such that 
\begin{align}\label{exsitence assumpiton}
\int_{\mathbb{X}}\Phi(h)|Dh|^2dz
=\inf_{g\in{\rm Diff}^\Phi_{g_0}(\overline{\mathbb{X}},\overline{\mathbb{Y}})}\int_{\mathbb{X}}\Phi(g)|Dg|^2dz.
\tag{$C$-$\Phi$}
\end{align}
Then we establish the uniqueness of such  minimizers, if they exists:
\begin{prop}\label{Uniqueness-Phi}
Let $\mathbb X$ and $\mathbb Y$ be  bounded simply connected domains in the complex plane $\mathbb C$. Suppose that the conjecture $\eqref{exsitence assumpiton}$ holds,
then this minimizer is unique.
\end{prop}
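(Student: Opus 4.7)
The plan is to apply Lemma~\ref{key-lemma} to two putative diffeomorphic minimizers $h,H\in{\rm Diff}^\Phi_{g_0}(\overline{\mathbb{X}},\overline{\mathbb{Y}})$ and extract enough positivity from the right-hand side of \eqref{Integral-Identity} to force $h=H$. Since $h$ is a smooth minimizer, an inner variation yields the $\Phi$-Hopf-Laplace equation \eqref{phi-laplacian}, so
\[
\varphi(z):=\Phi(h(z))\,h_z(z)\,\overline{h_{\bar z}(z)}
\]
is a holomorphic quadratic differential on $\mathbb{X}$ with $|\varphi|=\Phi(h)|h_zh_{\bar z}|$. Set $f:=H^{-1}\circ h:\mathbb{X}\to\mathbb{X}$; this is a $C^\infty$-diffeomorphism extending continuously to the identity on $\partial\mathbb{X}$, because $h=H=g_0$ there. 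Since $\mathscr{E}^\Phi_\mathbb{X}[H]=\mathscr{E}^\Phi_\mathbb{X}[h]$, Lemma~\ref{key-lemma} furnishes
\[
0 = \underbrace{4\int_{\mathbb{X}}\!\left[\frac{|f_z-\gamma(z)f_{\bar z}|^2}{|f_z|^2-|f_{\bar z}|^2}-1\right]|\varphi|\,dz}_{=:\,I_1} + \underbrace{4\int_{\mathbb{X}}\Phi(h)\,\frac{(|h_z|-|h_{\bar z}|)^2|f_{\bar z}|^2}{|f_z|^2-|f_{\bar z}|^2}\,dz}_{=:\,I_2}.
\]

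The integrand of $I_2$ is pointwise non-negative, so $I_2\ge 0$. Once $I_1\ge 0$ is established (the main obstacle, see below), both $I_1$ and $I_2$ must vanish. The vanishing of $I_2$, combined with $|h_z|>|h_{\bar z}|$ almost everywhere (since $h$ is an orientation-preserving diffeomorphism), forces $f_{\bar z}\equiv 0$, i.e.\ $f$ is holomorphic on $\mathbb{X}$. As $f$ is a holomorphic self-homeomorphism of $\overline{\mathbb{X}}$ that restricts to the identity on $\partial\mathbb{X}$, the maximum principle applied to $f(z)-z$ gives $f\equiv\mathrm{id}$, and therefore $h=H$.

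The crux is showing $I_1\ge 0$, whose integrand is \emph{not} pointwise non-negative. My approach is to invoke the Fubini-type integration formula along vertical trajectories of $\varphi$, reducing the claim to
\[
\int_\sigma\frac{|f_z-\gamma(z)f_{\bar z}|^2}{|f_z|^2-|f_{\bar z}|^2}\,|\varphi|^{1/2}\,|dz| \;\ge\; \int_\sigma |\varphi|^{1/2}\,|dz|
\]
for almost every non-critical vertical trajectory $\sigma$ of $\varphi$. Each such $\sigma$ is a cross-cut of $\mathbb{X}$, and because $f$ fixes $\partial\mathbb{X}$ pointwise, the image $f(\sigma)$ shares its endpoints with $\sigma$. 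Lemma~\ref{minimum-vertical-arc} then yields
\[
\int_\sigma |\varphi|^{1/2}|dz| \;\le\; \int_{f(\sigma)}|\varphi|^{1/2}|dw| \;=\; \int_\sigma |\varphi(f(z))|^{1/2}\,|f_z-\gamma(z)f_{\bar z}|\,|dz|,
\]
using that the Euclidean speed of $f\circ\sigma$ along a vertical direction of $\varphi$ equals $|f_z-\gamma(z)f_{\bar z}|$. The target one-dimensional inequality should then follow from a Cauchy-Schwarz inequality with weights $\sqrt{J_f}$ and $|\varphi|^{1/4}$, after suitably handling the mismatch between $|\varphi|$ evaluated at $z$ versus at $f(z)$. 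The principal obstacle is precisely this weight mismatch; I anticipate that it can be reconciled by applying Lemma~\ref{minimum-vertical-arc} symmetrically to $f^{-1}(\sigma)$, or by combining the identities from Lemma~\ref{key-lemma} for the pairs $(h,H)$ and $(H,h)$ and exploiting that both Hopf differentials interact through $f$. Once the per-trajectory estimate is established, the Fubini-type formula upgrades it to $I_1\ge 0$, closing the proof.
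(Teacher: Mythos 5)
Your setup coincides with the paper's: form $f=H^{-1}\circ h$, invoke Lemma~\ref{key-lemma}, note that $I_2\ge 0$ pointwise, and, once $I_1\ge 0$ is known, conclude $f_{\bar z}\equiv 0$ and then $f=\mathrm{id}$ from the boundary condition (your maximum-principle ending is fine; the paper uses the identity theorem on a boundary arc instead). The gap is exactly where you flag it: $I_1\ge 0$ is not established, and the route you sketch does not close. Your target per-trajectory inequality
\[
\int_\sigma\frac{|\partial_{\rm v}f|^2}{J_f}\,|\varphi|^{1/2}\,|dz|\;\ge\;\int_\sigma|\varphi|^{1/2}\,|dz|
\]
cannot be reached from Lemma~\ref{minimum-vertical-arc} by a one-dimensional Cauchy--Schwarz: that would require controlling $\int_\sigma J_f\,|\varphi\circ f|\,|\varphi|^{-1/2}\,|dz|$ by $\int_\sigma|\varphi|^{1/2}\,|dz|$, and there is no one-dimensional change of variables turning $J_f\,|\varphi\circ f|$ into $|\varphi|$ along a single curve. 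Neither of your fallback suggestions (applying the length-minimizing lemma to $f^{-1}(\sigma)$, or symmetrizing in $(h,H)$) resolves this mismatch.

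The paper's fix is to reverse the order of the two tools. Apply Cauchy--Schwarz \emph{globally} over $\mathbb{X}$ with the cross weight $\sqrt{|\varphi|}\sqrt{|\varphi\circ f|}$:
\[
\int_{\mathbb{X}}\frac{|\partial_{\rm v}f|^2}{J_f}\,|\varphi|\,dz\;\ge\;\frac{\Bigl(\int_{\mathbb{X}}|\partial_{\rm v}f|\,\sqrt{|\varphi|}\,\sqrt{|\varphi\circ f|}\,dz\Bigr)^2}{\int_{\mathbb{X}}J_f\,|\varphi\circ f|\,dz},
\]
where the denominator equals $\int_{\mathbb{X}}|\varphi|\,dz$ by the \emph{two-dimensional} change of variables. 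Only the numerator is then treated by trajectories: writing it as $\int_{\mathbb{X}}|\varphi|\cdot\bigl(|\partial_{\rm v}f|\sqrt{|\varphi\circ f|}/\sqrt{|\varphi|}\bigr)\,dz$ and applying the Fubini-type formula, the relevant line integral $\int_\gamma|\partial_{\rm v}f|\sqrt{|\varphi\circ f|}\,|dz|$ is \emph{exactly} $\int_{f(\gamma)}\sqrt{|\varphi|}\,|dw|$ --- no weight mismatch remains --- and Lemma~\ref{minimum-vertical-arc} bounds this below by $\int_\gamma\sqrt{|\varphi|}\,|dz|$. This yields $\int_{\mathbb{X}}|\partial_{\rm v}f|\sqrt{|\varphi|}\sqrt{|\varphi\circ f|}\,dz\ge\int_{\mathbb{X}}|\varphi|\,dz$ and hence $I_1\ge 0$. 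With this reordering the rest of your argument goes through.
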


Before proceeding the uniqueness of $\Phi$-Hopf-harmonic diffeomorphisms, let us give an equivalent characterization. 
\subsection{  $\Phi$-Hopf-harmonic diffeomorphisms are the energy minimizers}\label{hopf-harmonic}
It's known that, a diffeomorphism $h:\mathbb{X}\xrightarrow{\rm onto}\mathbb{Y}$ which minimizes the $\Phi$-weighted Dirichlet energy in ${\rm Diff}^\Phi_{g_0}(\overline{\mathbb{X}}, \overline{\mathbb{Y}})$ is a $\Phi$-Hopf-harmonic diffeomorphism; see\cite{iwaniec2013lipschitz}. Actually, the converse also holds.
\begin{prop}\label{phi-laplacian-minimum}
Let $\mathbb{X}$ and $\mathbb{Y}$ be bounded simply connected domains in the complex plane $\mathbb{C}$. Let $g_0: \overline{\mathbb{X}}\xlongrightarrow{\rm onto}\overline{\mathbb{Y}} $ be an homeomorphism, diffeomorphic in $\mathbb{X}$, of Sobolev class $W^{1,2}(\mathbb{X},\mathbb{C})$ with finite $\Phi$-weighter energy. Suppose that $h\in{\rm Diff}^\Phi_{g_0}(\overline{\mathbb{X}},  \overline{\mathbb{Y}})$. Then $h$ is $\Phi$-Hopf-harmonic if and only if
\begin{align*}
\int_{\mathbb{X}}\Phi(h)|Dh|^2
=\inf_{g\in { \rm Diff}^\Phi_{g_0}(\overline{\mathbb{X}},\overline{\mathbb{Y}})}\int_{\mathbb{X}}\Phi(g)|Dg|^2.
\end{align*}
\end{prop}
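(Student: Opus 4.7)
Since the implication ``minimizer $\Rightarrow$ $\Phi$-Hopf-harmonic'' is the classical inner-variation argument cited from \cite{iwaniec2013lipschitz}, the content of Proposition~\ref{phi-laplacian-minimum} that needs a proof is the converse: every $\Phi$-Hopf-harmonic $h \in \mathrm{Diff}^\Phi_{g_0}(\overline{\mathbb{X}}, \overline{\mathbb{Y}})$ is an energy minimizer. Fix such an $h$ and set $\varphi(z) := \Phi(h(z))\,h_z(z)\overline{h_{\bar z}(z)}$; the $\Phi$-Hopf-Laplace equation says exactly that $\varphi$ is holomorphic on $\mathbb{X}$, so the trajectory theory from Section~2 is available. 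Given an arbitrary competitor $H \in \mathrm{Diff}^\Phi_{g_0}(\overline{\mathbb{X}}, \overline{\mathbb{Y}})$, the composition $f := H^{-1}\circ h\colon \mathbb{X}\to\mathbb{X}$ is a $C^\infty$ orientation-preserving self-diffeomorphism that equals the identity on $\partial\mathbb{X}$, since both $h$ and $H$ coincide with $g_0$ there.

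Inserting $f$ into Lemma~\ref{key-lemma} gives $\mathscr{E}^\Phi_\mathbb{X}[H] - \mathscr{E}^\Phi_\mathbb{X}[h] = I_1 + I_2$, where $I_1,I_2$ denote the two integrals on the right-hand side of \eqref{Integral-Identity}. The integrand of $I_2$ is pointwise nonnegative, so $I_2 \geq 0$ is immediate. For $I_1$, observe that on $\{\varphi \neq 0\}$---the complement of an isolated set---one has $\gamma(z) = \varphi(z)/|\varphi(z)|$, whence $|f_z - \gamma f_{\bar z}| = |\partial_{\rm v} f|$ with $\partial_{\rm v}$ the vertical derivative associated with $\varphi$, and $\Phi(h)|h_z h_{\bar z}| = |\varphi|$. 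Therefore
$$
I_1 = 4\int_\mathbb{X}\!\bigl(|\partial_{\rm v} f|^2/J_f - 1\bigr)|\varphi|\,dz,
$$
a Reich--Strebel-type quantity whose nonnegativity is what remains to be proved.

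My plan for $I_1 \geq 0$ is to reduce to one-dimensional integrals along vertical trajectories using the Fubini-like formula from Section~2, and then to invoke Lemma~\ref{minimum-vertical-arc} as the geometric input. By Strebel's theorem, every noncritical vertical trajectory $\gamma_0$ of $\varphi$ is a cross-cut of $\mathbb{X}$; because $f$ fixes $\partial\mathbb{X}$, the image $f(\gamma_0)$ is a rectifiable curve in $\mathbb{X}$ with the same two boundary endpoints as $\gamma_0$, and Lemma~\ref{minimum-vertical-arc} yields $\int_{\gamma_0}|\varphi|^{1/2}|dz| \leq \int_{f(\gamma_0)}|\varphi|^{1/2}|dw|$. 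Parametrizing $\gamma_0$ by Euclidean arclength $t$, the unit tangent $\omega$ satisfies $\omega^2 = -\bar\varphi/|\varphi|$, and a short computation identifies the Euclidean speed of $f(z(t))$ with $|\partial_{\rm v} f(z(t))|$. The main obstacle I anticipate is the resulting mismatch between $|\varphi|$ evaluated at $z(t)$ (on the left) and at $f(z(t))$ (on the right): handling it cleanly is exactly the role of the classical Reich--Strebel bookkeeping. A Cauchy--Schwarz step along $\gamma_0$, combined with the change of variable $w = f(z)$ on the planar integral (which produces the Jacobian $J_f$) and a second application of the Fubini-like formula, should assemble the trajectory-wise estimates into the planar inequality $I_1 \geq 0$. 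Given $I_1, I_2 \geq 0$, one concludes $\mathscr{E}^\Phi_\mathbb{X}[H] \geq \mathscr{E}^\Phi_\mathbb{X}[h]$ for every competitor $H$, completing the proof that $h$ minimizes the $\Phi$-weighted Dirichlet energy.
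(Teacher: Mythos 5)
Your proposal follows essentially the same route as the paper's own proof: the decomposition of $\mathscr{E}^\Phi_{\mathbb{X}}[H]-\mathscr{E}^\Phi_{\mathbb{X}}[h]$ via Lemma~\ref{key-lemma}, the pointwise nonnegativity of the second integral, and the Reich--Strebel argument for the first (Cauchy--Schwarz, the change of variables producing $J_f\,|\varphi\circ f|$, the minimal-length property of vertical trajectories from Lemma~\ref{minimum-vertical-arc} applied to $f(\gamma)$ with fixed boundary endpoints, and the Fubini-like integration formula). The only difference is organizational --- you propose Cauchy--Schwarz trajectory-wise where the paper applies H\"older on the planar integral first --- and your closing ``bookkeeping'' is exactly the computation the paper carries out in \eqref{approx-3}--\eqref{approx-4}.
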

\begin{proof}
	Let $h\in { \rm Diff}^\Phi_{g_0}(\overline{\mathbb{X}},\overline{\mathbb{Y}})$ be a $\Phi$-Hopf-harmonic diffeomorphism. Then
\begin{align*}
	\Phi(h)h_z\overline{h_{\bar z}}=\varphi
	\quad {\rm for \ some\ holomorphic\ }\varphi\not\equiv 0.
\end{align*}
Let $H\in { \rm Diff}^\Phi_{g_0}(\overline{\mathbb{X}},\overline{\mathbb{Y}})$. Define
\begin{align*}
	f=H^{-1}\circ h:\mathbb{X}\xrightarrow{\rm onto}\mathbb{X}.
\end{align*}
In view of Lemma \ref{key-lemma}, we see that
\begin{equation}\label{approx-1}
	\begin{aligned}
		\mathscr{E}^{\Phi}_{\mathbb{X}}[H]-\mathscr{E}^{\Phi}_{\mathbb{X}}[h]
		&=4\int_{\mathbb{G}}
		\left[\frac{|f_z-\frac{\varphi}{|\varphi^j|}f_{\bar z}|^2}{|f_z|^2-|f_{\bar z}|^2}-1\right]|\varphi|dz\\
		&\quad +4\int_{\mathbb{X}}\Phi(h)\cdot\frac{(|h_z|-|h_{\bar z}|)^2|f_{\bar z}|^2}{|f_z|^2-|f_{\bar z}|^2}dz.
	\end{aligned}
\end{equation}
Since $f$ is a sense-preserving diffeomorphism and $\Phi$ is positive, the last integral in \eqref{approx-1} is nonnegative,
\begin{align*}
\int_{\mathbb{X}}\Phi(h^j)\cdot\frac{(|h_z|-|h_{\bar z}|)^2|f_{\bar z}|^2}{|f_z|^2-|f_{\bar z}|^2}dz\ge 0.
\end{align*}
We estimate the first integral in 
\eqref{approx-1} by H\"older's inequality,
\begin{equation}\label{first-part}
	\int_{\mathbb{X}}
	\frac{|f_z-\frac{\varphi}{|\varphi|}f_{\bar z}|^2}{|f_z|^2-|f_{\bar z}|^2}|\varphi|dz
\ge\frac{\left(\int_{\mathbb{X}}|f^j_z-\frac{\varphi}{|\varphi|}f_{\bar z}|\sqrt{|\varphi|}\sqrt{|\varphi\circ f|}dz\right)^2}
{\int_{\mathbb{X}}J_{f}|\varphi\circ f|dz}.
\end{equation}
The denominator is bounded from above, by the $\mathscr{L}^1$-norm of $\varphi$,
\begin{align*}
	\int_{\mathbb{X}}J_{f}|\varphi\circ f|dz
=\int_{\mathbb{X}}|\varphi|dz.
\end{align*}
By \eqref{approx-1} and \eqref{first-part}, we have
\begin{equation}\label{approx-3}
\begin{aligned}
\mathscr{E}^{\Phi}_{\mathbb{X}}[H]-\mathscr{E}^{\Phi}_{\mathbb{X}}[h]
&\ge4\frac{\left(\int_{\mathbb{G}}|f_z-\frac{\varphi}{|\varphi|}f_{\bar z}|\sqrt{|\varphi|}\sqrt{|\varphi\circ f|}dz\right)^2}
{\int_{\mathbb{X}}|\varphi|dz}
-4\int_{\mathbb{X}}|\varphi|\\
&=4\frac{\left(\int_{\mathbb{G}}|\partial_{\rm v}f|\sqrt{|\varphi|}\sqrt{|\varphi\circ f|}dz\right)^2}
{\int_{\mathbb{X}}|\varphi|dz}
-4\int_{\mathbb{X}}|\varphi|
\end{aligned}
\end{equation}
Concerning the numerator, we shall make use of Fubini's theorem. First, we change the variables in line integrals over thevertical trajectories. Namely, for almost every vertical noncritical trajectory $\gamma$ it
holds that
\begin{align}\label{approx-3.5}
\int_{\gamma}|\partial_vf|\sqrt{|\varphi\circ f|}=\int_{f(\gamma)}\sqrt{|\varphi|}.
\end{align}
Since $f$ is the identity map on $\partial\mathbb{X}$ and the trajectory has two distinct endpoints $x_1,x_2$ on $\partial\mathbb{X}$.
Therefore, $f(\gamma)$  has
the same endpoints as $\gamma$. Now, by Lemma \ref{minimum-vertical-arc} we have 
\begin{align*}
	\int_{f(\gamma|)}\sqrt{|\varphi|}|dz|
	\ge\int_{\gamma}\sqrt{|\varphi|}|dz|.
\end{align*}
Therefore,
\begin{align*}
\int_{\gamma|}|\partial_{\rm v}f|\sqrt{|\varphi\circ f|}\ge\int_{\gamma}\sqrt{|\varphi|}.
\end{align*}
Fubini's integration formula \eqref{Fubini's 2} yields
\begin{align}\label{approx-4}
\int_{\mathbb{X}}|\partial_{\rm v}f|\sqrt{|\varphi\circ f|}\sqrt{|\varphi|}\ge\int_{\mathbb{X}}|\varphi|.
\end{align}
Combining \eqref{approx-3} and \eqref{approx-4}, we obtain
\begin{align*}
	\mathscr{E}^{\Phi}_{\mathbb{X}}[H]-\mathscr{E}^{\Phi}_{\mathbb{X}}[h]\ge 0.
\end{align*}
This also finishes the proof of Proposition \ref{phi-laplacian-minimum}.
\end{proof}
\subsection{Uniqueness, proof of Proposition \ref{Uniqueness-Phi}}\label{uniqueness-phi}
First, under the assumption \eqref{exsitence assumpiton}, suppose that $h\in {\rm Diff}^\Phi_{g_0}(\overline{\mathbb{X}},\overline{\mathbb{Y}})$ is a minimizer of the $\Phi$-weighter Dirichlet energy subject to Sobolev diffeomorphic mappings in ${\rm Diff}^\Phi_{g_0}(\overline{\mathbb{X}},\overline{\mathbb{Y}})$. Let  ${\rm Diff}^\Phi_{g_0}(\overline{\mathbb{X}},\overline{\mathbb{Y}})$ be a $\Phi$-Hopf-harmonic diffeomorphism, we will prove that
\begin{align*}
    h(z)=H(z)\quad {\rm for\ all\ } z\in\mathbb{X}.
\end{align*}
By Proposition \ref{phi-laplacian-minimum},
\begin{align*}
	\Phi(h)h_z\overline{h_{\bar z}}&=\varphi\qquad {\rm for\ some \ holomorphic\ }\varphi\\
	\Phi(H)H_z\overline{H_{\bar z}}&=\psi\qquad {\rm for\ some \ holomorphic\ }\psi,
\end{align*}
and $H$ minimizes the $\Phi$-weighted Dirichlet energy subject to Sobolev diffeomorphisms in ${\rm Diff}^\Phi_{g_0}(\overline{\mathbb{X}},\overline{\mathbb{Y}})$. 
Fix a disk $\mathbb{D}\Subset\mathbb{X}$ and set $\mathbb{F}:=h^{-1}\circ H(\overline{D})$, then $\mathbb{F}\subset\mathbb{X}$ is compact.
In view of \eqref{Integral-Identity}, it follows that
\begin{equation}\label{unique-part-1}
\begin{aligned}
    \mathscr{E}^{\Phi}_{\mathbb{X}}[H]-\mathscr{E}^{\Phi}_{\mathbb{X}}[h]
&=4\int_{\mathbb{X}}
\frac{|\partial_{v}f|^2}{J_f}|\varphi|dz-4\int_{\mathbb{X}}|\varphi|dz\\
&\quad+4\int_{\mathbb{X}}\Phi(h)\cdot\frac{(|h_z|-|h_{\bar z}|)^2|f_{\bar z}|^2}{J_f}dz.
\end{aligned}
\end{equation}
We estimate the first integral in 
\eqref{unique-part-1} by H\"older's inequality,
\begin{align*}
\int_{\mathbb{X}}
\frac{|\partial_vf|^2}{J_f}|\varphi|dz
\ge\frac{\left(\int_{\mathbb{X}}|\partial_vf|\sqrt{|\varphi|}\sqrt{|\varphi\circ f|}dz\right)^2}
{\int_{\mathbb{X}}J_{f}|\varphi\circ f|dz}.
\end{align*}
Following the arguments in Proposition \ref{phi-laplacian-minimum}, we obtain
\begin{align}\label{unique-part-2}
\int_{\mathbb{X}}\frac{|\partial_v f|^2}{J_f}|\varphi|dz\ge \int_\mathbb{X}|\varphi|dz.
\end{align}
This together with \eqref{unique-part-1} gives
\begin{align*}
\mathscr{E}^{\Phi}_{\mathbb{X}}[H]-\mathscr{E}^{\Phi}_{\mathbb{X}}[h]
&\ge4\int_{\mathbb{X}}\Phi(h)\cdot\frac{(|h_z|-|h_{\bar z}|)^2|f_{\bar z}|^2}{J_f}dz\\
&\ge4\int_{\mathbb{F}}\Phi(h)\cdot\frac{(|h_z|-|h_{\bar z}|)^2|f_{\bar z}|^2}{J_f}dz.
\end{align*}
Since $h$ is an orientation-preserving diffeomorphism on $\mathbb{X}$, and $\Phi(h)$ is continuous on $\mathbb{X}$, we have
$|h_z|-|h_{\bar z}|,\Phi(h)\ge c>0$ for every $z\in\mathbb{F}\Subset\mathbb{X}$ and constants $c(\mathbb{F})>0$
\begin{equation}\label{step3-uniquness}
\begin{aligned}
\mathscr{E}^{\Phi}_{\mathbb{X}}[H]-\mathscr{E}^{\Phi}_{\mathbb{X}}[h]
&\ge4c^3\int_{\mathbb{F}}\frac{|f_{\bar z}|^2}{J_{f}}dz=4c^3\int_{f(\mathbb{F})}|g_{\bar \omega}|^2d\omega\\
&\ge 4c^3\int_{\mathbb{D}}|g_{\bar \omega}|^2d\omega.
\end{aligned}
\end{equation}
Here we have made the substitution $z=g^k(\omega)$. We see that $g_{\bar\omega}=0$ on $\mathbb{D}$. But $\mathbb{D}\subset\mathbb{X}$ is arbitrary, so $f: \mathbb{X}\xlongrightarrow{\rm onto} \mathbb{X}$ and $g=f^{-1}: \mathbb{X}\xlongrightarrow{\rm onto} \mathbb{X}$ are conformal.

Next, we are going to show that $f(z)=z$. Note that $\overline{h^{-1}(\mathbb{Y})}=
\partial\mathbb{X}$. Now, the conformal mapping $f:\mathbb{X}\xrightarrow{\rm onto}\mathbb{X}$ extends continuously to $\partial\mathbb{X}$. Since $h(z)=H(z)$ on $\partial\mathbb{X}$, we have $f(z)=z$ on $\partial\mathbb{X}$. Finally, we appeal
to the general fact that two holomorphic functions in $\mathbb{X}$, continuous on $\overline{\mathbb{X}}$, are the same if they coincide on an arc of $\overline{\mathbb{X}}$. Therefore, $f(z)=z$ in $\mathbb{X}$, which means that $h(z)=H(z)$ for all $z\in\mathbb{X}$.
\section{Proof of Theorem \ref{unique-diffeomorphism}}
In this section, we will prove the uniqueness of the $C^\infty$-diffeomorphic minimizer under the conjecture \eqref{con-Y-p}.

Given the boundary data $f_0$, we define the subclass of $\mathscr{H}^p_{f_0}(\overline{\mathbb{Y}},\overline{\mathbb{X}})$ (see \eqref{homeomorphism,f_0,p}) consisting of diffeomorphic mappings
\begin{equation}\label{definition-Diff_f0}
    \text{Diff}^p_{f_0}(\overline{\mathbb{Y}},\overline{\mathbb{X}}):=\left\{ f\in\mathscr{H}^p_{f_0}(\overline{\mathbb{Y}},\overline{\mathbb{X}}): f:\mathbb{Y}\xrightarrow{\rm onto}\mathbb{X}\ {\rm is\ diffeomorphic} \right\}.
\end{equation}

\begin{lem}\label{basic inequality phi-p}
Let $f$ be a $C^\infty$-diffeomorphic minimizer satisfying \eqref{con-Y-p} and  $\Phi(\cdot):=K_{f}^{p-1}(\cdot)$. For any $g$ with $g^{-1}\in{\rm Diff}^p_{f_0}(\overline{\mathbb{Y}},\overline{\mathbb{X}})$, we obtain
\begin{align}\label{comparison-two-energy}
\mathscr{E}^{\Phi}_{\mathbb{X}}[g]\le \mathscr{E}_p[f]^{\frac{p-1}{p}}\cdot\mathscr{E}_p[g^{-1}]^{\frac{1}{p}}
\end{align}
\end{lem}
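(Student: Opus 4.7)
The plan is a one-line change of variables followed by H\"older's inequality; notably, the minimizer status of $f$ is not used in this particular estimate --- it will enter only later, when equality in H\"older's inequality is traced and combined with Proposition \ref{Uniqueness-Phi}. The only hypotheses that actually do any work here are that $K_f\in L^p(\mathbb Y)$ (so that $\Phi=K_f^{p-1}$ makes sense as an integrable weight) and that $g$ is a $C^\infty$-diffeomorphism.

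The first step is to translate the LHS from an integral over $\mathbb X$ to one over $\mathbb Y$. Since $g^{-1}\in\mathrm{Diff}^p_{f_0}(\overline{\mathbb Y},\overline{\mathbb X})$, the map $g:\mathbb X\to\mathbb Y$ is a $C^\infty$-diffeomorphism with strictly positive Jacobian, so the substitution $w=g(z)$, $dw=J_g(z)\,dz$, is rigorous. Using the pointwise identity $|Dg(z)|^2 = K_g(z)J_g(z)$ from \eqref{distfn} (in the normalization of \eqref{phi-energy}) together with the chain-rule fact $K_g(g^{-1}(w)) = K_{g^{-1}}(w)$, a short calculation yields
\begin{align*}
\mathscr E^{\Phi}_{\mathbb X}[g] \;=\; \int_{\mathbb X} K_f^{p-1}(g(z))\,|Dg(z)|^2\,dz \;=\; \int_{\mathbb Y} K_f^{p-1}(w)\,K_{g^{-1}}(w)\,dw.
\end{align*}

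The second step is H\"older's inequality with conjugate exponents $p/(p-1)$ and $p$, using $(p-1)\cdot p/(p-1) = p$:
\begin{align*}
\int_{\mathbb Y} K_f^{p-1}\cdot K_{g^{-1}}\,dw \;\leqslant\; \left(\int_{\mathbb Y} K_f^p\,dw\right)^{\frac{p-1}{p}}\left(\int_{\mathbb Y} K_{g^{-1}}^p\,dw\right)^{\frac{1}{p}} \;=\; \mathscr E_p[f]^{\frac{p-1}{p}}\,\mathscr E_p[g^{-1}]^{\frac{1}{p}}.
\end{align*}

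There is essentially no obstacle in this proof. Both integrals on the right are finite by the hypotheses $f,g^{-1}\in\mathrm{Diff}^p_{f_0}(\overline{\mathbb Y},\overline{\mathbb X})$, and the change of variables is legitimate because $g$ is a smooth diffeomorphism with positive Jacobian. The only pointwise identity worth a brief verification is $K_g\circ g^{-1} = K_{g^{-1}}$, which follows from the relation $Dg^{-1}(w)=(Dg(g^{-1}(w)))^{-1}$ and the invariance of the Hilbert--Schmidt norm of a $2\times 2$ matrix under inversion (up to a factor of the determinant). Thus the lemma is, in effect, a repackaging of H\"older's inequality adapted to the distortion/Jacobian algebra, and it serves as the bridge that allows Proposition \ref{Uniqueness-Phi} (uniqueness of minimizers of the $\Phi$-weighted Dirichlet energy) to be converted into Theorem \ref{unique-diffeomorphism} (uniqueness of $L^p$-mean distortion minimizers).
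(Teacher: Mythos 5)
Your proof is correct and is essentially the paper's argument: both rest on the identity $|Dg|^2=K_gJ_g$ (in the normalization used in \eqref{phi-energy}), the change of variables $w=g(z)$, and H\"older's inequality with exponents $\tfrac{p}{p-1}$ and $p$; the only difference is that you change variables to $\mathbb{Y}$ first and then apply H\"older there, whereas the paper applies H\"older on $\mathbb{X}$ with the splitting $K_f^{p-1}(g)J_g^{(p-1)/p}\cdot|Dg|^2J_g^{-(p-1)/p}$ and changes variables in each factor afterward --- the same computation in a different order. Your side remarks (that the minimizing property of $f$ is not used here, and that $K_g\circ g^{-1}=K_{g^{-1}}$ follows from $\|A^{-1}\|^2=\|A\|^2/(\det A)^2$ for $2\times2$ matrices) are also accurate.
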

\begin{proof}
By the definition of $\mathscr{E}^{\Phi}_{\mathbb{X}}[\cdot]$ and H\"older's inequality,
\begin{align*}
\mathscr{E}^{\Phi}_{\mathbb{X}}[g]
&=\int_{\mathbb{X}}K^{p-1}_{f}(g)J_g^{\frac{p-1}{p}}\cdot\frac{|Dg|^2}{J_g^{\frac{p-1}{p}}}dz\\
&\le\left(\int_{\mathbb{X}}K_{f}^p(g)J_gdz\right)^{\frac{p-1}{p}}\cdot\left(\int_{\mathbb{X}}\frac{|Dg|^{2p}}{J_g^{p-1}}dz\right)^{\frac{1}{p}}\\
&=\left(\int_{\mathbb{Y}}K_{f}^p(y)dy\right)^{\frac{p-1}{p}}\cdot\left(\int_{\mathbb{X}}K_g^{p-1}(z)|Dg|^2dz\right)^{\frac{1}{p}}\\
&=\mathscr{E}_p[f]^{\frac{p-1}{p}}\cdot\mathscr{E}_p[g^{-1}]^{\frac{1}{p}}
\end{align*}
\end{proof}
\begin{lem}\label{minimizer}
Let $f$ be a $C^\infty$-diffeomorphic minimizer satisfying \eqref{con-Y-p} and  $\Phi(\cdot):=K_{f}^{p-1}(\cdot)$. For any $g^{-1}\in{\rm Diff}^p_{f_0}(\overline{\mathbb{Y}},\overline{\mathbb{X}})$, define the $\Phi$-weighted Dirichlet energy as
\begin{align*}
	\mathscr{E}^{\Phi}_{\mathbb{X}}[g]:=\int_{\mathbb{X}}K_{f}^{p-1}(g)|Dg|^2dz.
\end{align*}
Then $h:=f^{-1}$ is the unique minimizer of the auxiliary $\mathscr{E}^{\Phi}_{\mathbb{X}}[\cdot]$ subject to the class of ${\rm Diff}^p_{f_0}(\overline{\mathbb{Y}},\overline{\mathbb{X}})$. Futhermore, 
	\begin{align}\label{phi=p}
		\mathscr{E}^{\Phi}_{\mathbb{X}}[h]=\mathscr{E}_p[f].
	\end{align}
\end{lem}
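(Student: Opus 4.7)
The plan is to reduce Lemma \ref{minimizer} to the uniqueness result Proposition \ref{Uniqueness-Phi} combined with the characterization Proposition \ref{phi-laplacian-minimum}. Concretely, I would show that $h:=f^{-1}$ satisfies the $\Phi$-Hopf-Laplace equation \eqref{phi-laplacian} with $\Phi=K_f^{p-1}$, and then invoke those two propositions to conclude that $h$ is the unique $\Phi$-weighted Dirichlet energy minimizer in the relevant diffeomorphism class.

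First I would establish the energy identity $\mathscr{E}^{\Phi}_{\mathbb{X}}[h]=\mathscr{E}_p[f]$ as the equality case in Lemma \ref{basic inequality phi-p} evaluated at $g=h$. The H\"older step in \eqref{comparison-two-energy} saturates exactly when $K_f(g)$ and $K_g$ are proportional, and at $g=h$ this is the standard two-dimensional inverse-distortion identity $K_f(h(x))=K_h(x)$. Substituting $g=h$ then gives $\mathscr{E}^{\Phi}_{\mathbb{X}}[h]=\mathscr{E}_p[f]^{(p-1)/p}\cdot\mathscr{E}_p[f]^{1/p}=\mathscr{E}_p[f]$. In particular, $h$ has finite $\Phi$-weighted energy and the correct boundary values, so $h\in\mathrm{Diff}^\Phi_{h_0}(\overline{\mathbb{X}},\overline{\mathbb{Y}})$ with $h_0:=f_0^{-1}|_{\partial\mathbb{X}}$.

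Next I would derive the $\Phi$-Hopf-Laplace equation for $h$. Since $f$ is a $C^\infty$-diffeomorphic minimizer, the inner variation $f\mapsto f\circ\phi_t^{-1}$, with $\phi_t$ a smooth isotopy of $\overline{\mathbb{Y}}$ fixing $\partial\mathbb{Y}$, stays in $\mathrm{Diff}^p_{f_0}$, and differentiation of $\mathscr{E}_p[f\circ\phi_t^{-1}]=\mathbb{E}_p[\phi_t\circ h]$ at $t=0$ produces the inner-variational identity \eqref{p-hopf}, as indicated in the paragraph preceding that equation. Using $K_h^{p-1}(x)=K_f^{p-1}(h(x))=\Phi(h(x))$, the identity \eqref{p-hopf} rewrites as $\partial_{\bar z}\bigl(\Phi(h)\,h_z\overline{h_{\bar z}}\bigr)=0$, i.e.\ the $\Phi$-Hopf-Laplace equation \eqref{phi-laplacian}. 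Proposition \ref{phi-laplacian-minimum} then yields that $h$ minimizes $\mathscr{E}^{\Phi}_{\mathbb{X}}$ over $\mathrm{Diff}^\Phi_{h_0}(\overline{\mathbb{X}},\overline{\mathbb{Y}})$, and Proposition \ref{Uniqueness-Phi} promotes this to uniqueness. Finally, Lemma \ref{basic inequality phi-p} shows that any competitor $g$ with $g^{-1}\in\mathrm{Diff}^p_{f_0}$ has $\mathscr{E}^{\Phi}_{\mathbb{X}}[g]<\infty$ and the required boundary data, so the stated subclass sits inside $\mathrm{Diff}^\Phi_{h_0}$ and $h$ is the unique minimizer there as well.

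The main obstacle will be the rigorous derivation of \eqref{p-hopf} from the diffeomorphic minimality of $f$: one must justify differentiation under the integral sign in $\mathbb{E}_p[\phi_t\circ h]$ and extract the resulting distributional identity in a form applicable to $h$. This is a by-now-standard but technically delicate inner-variation computation; once it is in hand, the lemma follows at once by combining the two general $\Phi$-weighted results already established in Section 3.
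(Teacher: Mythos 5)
Your proposal is correct and follows essentially the same route as the paper: pass to $h=f^{-1}$, obtain the inner-variational equation \eqref{p-hopf}, rewrite it as the $\Phi$-Hopf-Laplace equation via $K_h^{p-1}=K_f^{p-1}(h)=\Phi(h)$, and then invoke Propositions \ref{phi-laplacian-minimum} and \ref{Uniqueness-Phi}. The only cosmetic differences are that you derive \eqref{phi=p} as the equality case of the H\"older step in Lemma \ref{basic inequality phi-p} (using $K_h=K_f(h)$) rather than by a direct change of variables, and you spell out the inner-variation computation that the paper only cites.
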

\begin{proof}
Since $f$ is a $C^\infty$-diffeomorphic minimizer satisfying \eqref{con-Y-p}, a change of variables shows that the inverse map $h:=f^{-1}$ minimizes the energy $\mathbb{E}_p[\cdot]$, (see \eqref{p-energy}), among the Sobolev diffeomorphisms whose inverse belong to ${\rm Diff}^p_{f_0}(\overline{\mathbb{Y}},\overline{\mathbb{X}})$. Hence, we have 
\begin{align*}
    \frac{\partial}{\partial\bar z}\left(K^{p-1}_h h_z\overline{h_{\bar z}}\right)= \frac{\partial}{\partial\bar z}\left(K^{p-1}_f(h) h_z\overline{h_{\bar z}}\right)=0.
\end{align*}
Note that $\Phi(\cdot):=K_f^{p-1}(\cdot)$ is positive and continuous. Applying Proposition \ref{phi-laplacian-minimum}, we conclude that $h$ is the minimizer of $\mathscr{E}^{\Phi}_{\mathbb{X}}[\cdot]$ and Theorem \ref{Uniqueness-Phi} further implies that $h$ is unique. For the remaining part of the lemma, we can prove it using the definitions of the two energies and a change of variables.
\end{proof}
\begin{proof}[Proof of Theorem \ref{unique-diffeomorphism}]
     Suppose that $f$ and $\tilde{f}$ are minimizers
\begin{align*}
	\mathscr{E}_{p}[f]=\mathscr{E}_{p}[\tilde{f}]=\inf_{g\in{\rm Diff}_{f_0}(\overline{\mathbb{Y}},\overline{\mathbb{X}})}\mathscr{E}_{p}[g],
\end{align*}
and consider the auxiliary energy functional
\begin{align*}
	\mathscr{E}^{\Phi}_{\mathbb{X}}[\cdot]:=\int_{\mathbb{X}}K_{f}^{p-1}(\cdot)|D(\cdot)|^2dz,
\end{align*}
where $K_{f}$ is defined as in Lemma \ref{minimizer}. By Lemma \ref{minimizer},  $h:=f^{-1}$ is a minimizer of the energy $\mathscr{E}^{\Phi}_{\mathbb{X}}[\cdot]$. In particular,
\begin{align}\label{uniqueness-p-1}
   \mathscr{E}^{\Phi}_{\mathbb{X}}[h]\le\mathscr{E}^{\Phi}_{\mathbb{X}}[\tilde{h}],
\end{align}
where $\tilde{h}=\tilde{f}^{-1}$. Furthermore, Lemma \ref{basic inequality phi-p} asserts that
\begin{align}\label{uniqueness-p-2}
	\mathscr{E}^{\Phi}_{\mathbb{X}}[\tilde{h}]
	\le\mathscr{E}_{p}[f]^{\frac{p-1}{p}}\cdot\mathscr{E}_{p}[\tilde{f}]^{\frac{1}{p}}=\mathscr{E}_{p}[f],
\end{align}
where we use the fact  $\mathscr{E}_{p}[f]=\mathscr{E}_{p}[\tilde{f}]$ in \eqref{uniqueness-p-2}. Combining \eqref{phi=p}, \eqref{uniqueness-p-1} and \eqref{uniqueness-p-2}, we obtain
\begin{align}
	\mathscr{E}^{\Phi}_{\mathbb{X}}[h]\le\mathscr{E}^{\Phi}_{\mathbb{X}}[\tilde{h}]
	\le\mathscr{E}_{p}[f]=\mathscr{E}^{\Phi}_{\mathbb{X}}[h],
\end{align}
which means $h_1$ is also a minimizer of $\mathscr{E}^{\Phi}_{\mathbb{X}}[h]$. Lemma \ref{minimizer} has shown that the minimizer of $\mathscr{E}^{\Phi}_{\mathbb{X}}[\cdot]$ is unique. Hence, we have $f=h^{-1}=\tilde{h}^{-1}=\tilde{f}$, which completes the proof.
\end{proof}
\bibliographystyle{alpha}
\bibliography{Hopf}
\end{document}